\newtheorem{theorem}{Theorem}[section] 
\newtheorem{lemma}[theorem]{Lemma}     
\newtheorem{corollary}[theorem]{Corollary}
\newtheorem{proposition}[theorem]{Proposition}
\newcommand{\be}{\begin{eqnarray}}
\newcommand{\ee}{\end{eqnarray}}
\newcommand{\benn}{\begin{eqnarray*}}
\newcommand{\eenn}{\end{eqnarray*}}
\newcommand{\bse}{\begin{equation}}
\newcommand{\ese}{\end{equation}}
\newcommand{\bsenn}{\begin{displaymath}}
\newcommand{\esenn}{\end{displaymath}}
\newcommand{\R}{\mathbb{R}}
\newcommand{\logand}{\;\;{\rm and }\;\;}
\newcommand{\where}{\;\;{\rm where }\;\;}
\title[Diffusion and consensus in digraphs]
 {Diffusion and consensus on weakly connected directed graphs} 
\author{J. J. P. Veerman, E. Kummel}
\begin{document}
\maketitle

\begin{abstract}
\noindent Let $G$ be a weakly connected directed graph with asymmetric graph Laplacian ${\cal L}$.
Consensus and diffusion are dual dynamical processes defined on $G$ by $\dot x=-{\cal L}x$ for consensus
and $\dot p=-p{\cal L}$ for diffusion.  We consider both these processes as well their discrete time analogues.  We define a basis of row vectors $\{\bar \gamma_i\}_{i=1}^k$ of the
left null-space of ${\cal L}$ and a basis of column vectors $\{\gamma_i\}_{i=1}^k$
of the right null-space of ${\cal L}$ in terms of the partition
of $G$ into strongly connected components.
This allows for complete characterization of the asymptotic behavior
of both diffusion and consensus --- discrete and continuous --- in terms of these eigenvectors.

As an application of these ideas, we present a treatment of the pagerank algorithm
that is dual to the usual one. We further show that the \emph{teleporting}
(see below) feature usually included in the algorithm is not strictly necessary.

Together with \cite{CV}, this is a complete and self-contained treatment of
the asymptotics of consensus and diffusion on digraphs.
Many of the ideas presented here can be found scattered in the literature, though
mostly outside mainstream mathematics and not always with complete proofs.
This paper seeks to remedy this by providing a compact and accessible survey.
\end{abstract}


\section{Introduction} 
\label{chap:intro}
\noindent Directed graphs are an important generalization of undirected graphs because they have wide-ranging
applications.  Examples include models of the internet \cite{Brod} and social networks \cite{Carr},
food webs \cite{May}, epidemics \cite{Jom}, chemical reaction networks \cite{Rao},
databases \cite{Ang}, communication networks \cite{Ahls}, and
networks of autonomous agents in control theory \cite{Fax}; to name but a few.
In each of these examples, the relation defining adjacency between neighboring vertices is often not symmetric.
Such networks are most naturally
represented by directed graphs. Still, in many standard references, directed graphs rate
no more than one section or chapter out of many.
A notable exception to this is \cite{bang}, but even here,
little mention is made of the \emph{algebraic} theory for digraphs.
One reason for this is perhaps that adjacency matrices (and hence Laplacians) of undirected graphs
are symmetric and thus there is a complete basis of orthonormal eigenvectors, which is not
generally true for digraphs.

One aim of this article is to collect and explain some important results in algebraic digraph theory
in a unified context. We will see that the lack of symmetry, referred to above,
can actually be used to reveal a great deal of structure in general digraphs that has
\emph{no counterpart} in undirected graphs.
Many of these results are known or published in varied disciplines outside the mainstream
of mathematics, but frequently are only stated in the case of strongly connected graphs and
not always with proofs that apply more generally.

In considering applications, such as those mentioned in the first paragraph,
we found it a useful heuristic to fix once and for all \emph{the direction of the edges in the digraph
as the direction of the flow of information}. In most cases, this is a convenient way to determine
the (ultimately arbitrary) direction of the arrows of the graph, and unify the treatment of these
different applications. In our article, the direction of the flow
of information is exemplified by the consensus problem (see below): if $i$'s opinion is influenced by
$j$, then there is an edge from $j$ to $i$. We will see that a random walk on the same digraph
naturally is defined to go in the \emph{opposite} direction, i.e. from $i$ to $j$.
One of our main conclusions
is that these processes in opposite directions can be considered as \emph{duals} of one another.

Finally, we apply some of these insights to the pagerank algorithm most famously associated with Google founders Brin and Page.
The usual treatment of this algorithm involves a random walk in a graph defined by web links, together
with some random jumps. We give a dual interpretation in terms of how information
added to one page influences the collective network as a whole.

The outline of this article is as follows. In the next section, we give important definitions
and some background relating to our topic. In Section \ref{chap:prior},
we describe the structure of the right kernel of the random walk Laplacian ${\cal L}$ of a directed graph $G$
(with or without teleporting). In particular, we show how it relates to the way strongly connected components of $G$ are linked. Most of this section is based on \cite{CV}.
In Section \ref{chap:random-walks}, we give the structure of the left kernel of ${\cal L}$ and similarly relate
it to how the different components of $G$ are linked. Furthermore, we show that the left kernel
encodes the asymptotic behavior of a random walk defined on $G$. In Section \ref{chap:left-and-right},
we use both left and right kernels, and show that the asymptotic behavior of the random walk on the one hand
and consensus on the other, are natural duals. In Section \ref{chap:ranking}, we give the dual
interpretation of the classical pagerank algorithm, and show that the common practice
of adding ``teleporting" to so-called ``dangling nodes" adds no new information
to the ranking order. Finally, in Section \ref{chap:examples}, we illustrate our definitions
and algorithms in one (small) graph.

\section{Definitions and Background}
\label{chap:definitions}
\noindent We now give an overview of the most relevant definitions concerning digraphs.

Let $G$ be a directed graph on $V = \{1,\dots,n\}$ with (directed) edge-set $E\subseteq V\times V$.
A directed edge $j\rightarrow i$ will be referred to as $ji$.
We also write $j\rightsquigarrow i$ if there exists a directed path in $G$ from vertex $j$ to vertex
$i$. To each (directed) edge $ji\in E$, we assign a positive weight $w_{ji}>0$. The \emph{adjacency matrix}
$Q$ is the $n\times n$ matrix whose $ij$ entry equals $w_{ji}>0$ if $ji\in E$ and zero otherwise.

The notion of ``connectedness" in undirected graphs has several natural extensions to directed graphs.
See \cite{bang} for more information. A digraph $G$ with vertex set $V$ is \emph{strongly connected} if
for all $i$ and $j$ in $V$, there is a path $i\rightsquigarrow j$ (and thus also a path
$j\rightsquigarrow i$). Such components are called \emph{strong components}.
A much weaker form of connectedness is the following.
$G$ is \emph{weakly connected} if for all $i$ and $j$ in $V$, there is an undirected path in G from $i$ to $j$.
Equivalently, $G$ is weakly connected if its underlying undirected graph (all edges replaced by undirected edges)
is connected. There is an intermediate form of connectedness, called \emph{unilateral connectedness} in
\cite{bang}. $G$ is \emph{unilaterally connected} if for all $i$ and $j$ in $V$, there is a
undirected path $i\rightsquigarrow j$ or $j\rightsquigarrow i$. A digraph that is not weakly connected
is disconnected. To study its properties, it is sufficient to study the properties of its weakly
connected components. Thus in this article, we assume without loss of generality that our digraphs
are weakly connected, but not necessarily unilaterally connected. From now on, we will
abbreviate \emph{(weighted) directed graph} to \emph{graph} unless misunderstanding is possible.

\begin{definition} \cite{CV}
For any vertex $j$, we define the \emph{reachable set} $R(j)$ of vertex $j$ to be the set containing
$j$ and all vertices $i$ such that $j \rightsquigarrow i$.
\label{def:reachable}
\end{definition}

\begin{definition} \cite{CV}
A set $R$ of vertices in a graph will be called a \emph{reach} if it is a maximal reachable set; in
other words, $R$ is a reach if $R = R(i)$ for some $i$ and there is no $j$ such that
$R(i)\subsetneq R(j)$. Since our graphs all have finite vertex sets, such maximal
sets exist
and are uniquely determined by the graph. For each reach $R_i$ of a graph, we define the \emph{exclusive
part} of $R_i$ to be the set $H_i = R_i \backslash \cup_{j\neq i}R_j$. Likewise, we define the
\emph{common part} of $R_i$ to be the set $C_i = R_i \backslash H_i$.
\label{def:reaches}
\end{definition}

Thus, a reach is a maximal unilaterally connected set. A graph will typically contain more than one reach.
Note that, by definition, the pairwise intersection of two exclusive sets is empty:
$H_i\cap H_j=\emptyset$ if $i\neq j$. The common sets can, however, intersect. Note further
that each reach $R$ contains at least one vertex $r$ such that its reachable set $R(r)$ equals
the entire reach. Such a vertex is called a \emph{root} of $R$. By definition, any root must
be contained in the exclusive part of its reach.

\begin{definition} Let $G$ be a digraph. Then each reach $R_i$ of $G$ contains a set of roots $B_i$.
The set $B_i$ is called the \emph{cabal} of $R_i$ and is contained in $H_i$.
(A cabal consisting of 1 vertex
is usually called a leader \cite{flock2}.)
\label{def:root-set}
\end{definition}

Dynamical processes on $G$ are most often defined using a version of the Laplacian operator.
In this paper we find it convenient to define these Laplacians in terms of a normalized adjacency
matrix $Q$ obtained by dividing each row by its sum.
This is straightforward, unless $Q$ contains a row $\{Q_{ij}\}_{j=1}^n$ that consists of entirely of zeroes.
One possible solution is to replace $Q_{ii}$ with a 1. We will denote the resulting matrix also by $Q$.
Another possibility is to replace each entry of the row with $1/n$. The resulting matrix
will also be called $Q$. This variation is often called \emph{teleporting}.
The reason for that name will be made clear in Section \ref{chap:random-walks}. Define the
\emph{in-degree} matrix $D$ is the $n\times n$ diagonal matrix whose $i$-th diagonal entry equals the  sum
of the $i$-th row of $Q$. Then, $S\equiv D^{-1}Q$. If we used ``teleporting", we will denote
the resulting matrix by $S_t$. Both $S$ and $S_t$ are \emph{row stochastic} matrices, that is:
they are non-negative and have row sum one.
We can now list three commonly used Laplacians together with their names:
\begin{enumerate}
\item the \emph{combinatorial Laplacian}  $L=D-Q$;
\item the \emph{random walk (rw) Laplacian}  ${\cal L} = I-S$;
\item the \emph{rw Laplacian with teleporting}  ${\cal L}=I-S_t$.
\end{enumerate}
All these Laplacians share the fact that they can be written as follows.

\begin{definition} \cite{CV} We define the following (non-symmetric) Laplacian matrix:
\bsenn
M\equiv D-DS \;,
\esenn
where $D$ is non-negative on the diagonal and zero elsewhere, and $S$ is a row stochastic matrix.
$M$ has non-negative diagonal entries and non-positive off-diagonal entries and any row-sum is zero.
\label{def:laplacians}
\end{definition}

There are many other conventions that may give different Laplacians for a given graph. For example,
if $Q$ is \emph{irreducible}, there is a leading all-positive eigenvector $r$ with (real) eigenvalue
$\lambda$. Let $R$ be the diagonal matrix whose diagonal entries equal $r_i$.
Then $S=\lambda^{-1}R^{-1}QR$ is the \emph{stochasticization} \cite{Boyle} of $Q$.
If $Q$ is undirected (symmetric), then the \emph{normalized} adjacency is $S=D^{-\frac12}Q D^{-\frac12}$.
This normalized matrix has the advantage that it is symmetric if and only if $Q$ is symmetric.
More sophisticated
definitions that give symmetric Laplacians for digraphs can be found in \cite{Chung} and \cite{ChZh}.
We will, however, not pursue these in this article.

The statement that $i$ is in the same strong component as $j$ is an equivalence relation on
the vertices of the graph $G$. Thus the vertices can be partitioned into strong components.
In turn, this generates a partial order on strong components $S_i\subset V$. Namely,
$S_1<S_2$ if $S_2$ is reachable from $S_1$.
The fact that a partial order can be extended to a total order implies \cite{bang} that the
strong components themselves can be ordered in such a way
that the adjacency matrix becomes block triangular, in such a way that each diagonal block
is the adjacency matrix of the subgraph of $G$ by induced by one of the strong components. Thus, the
spectrum of $Q$ in is one-to-one correspondence with the union of the spectra of the strong components
and gives no information of how the strong components are linked together. Even though
this is not entirely true for the eigenvalues of the various \emph{Laplacians}, we take this as
a cue to study the \emph{eigenvectors} of the Laplacians, in particular those that correspond
to the eigenvalue 0. We will see that the structure of these eigenvectors is intimately related
to how the strong components --- and, indeed, the unilateral components or reaches --- are
connected to one another.

Next we define two important dynamical processes on digraphs, diffusion and consensus.
we consider them from the start as dual to each other.

\begin{definition}
Let ${\cal L}$ be the rw Laplacian (with or without teleporting). \\
i) The \emph{consensus problem} on $G$ is the differential equation $\dot x=-{\cal L}x$.\\
ii) The \emph{diffusion problem} on $G$ is the differential equation $\dot p= -p {\cal L}$.\\
From hereon, $x$ is a column vector and $p$ is a row vector.
\label{def:diffusion-consensus1}
\end{definition}

If we discretize both these problems with time-steps of size 1, we get (noting that $-{\cal L}=S-I$)
\benn
\dot x = (S-I)x  &\longrightarrow &  x^{(n+1)}=Sx^{(n)} \;,\\
\dot p = p(S-I)  &\longrightarrow &  p^{(n+1)}=p^{(n)}S \;.\\
\eenn
This inspires our next definition.
\begin{definition}
Let $S=I-{\cal L}$ where ${\cal L}$ is the rw Laplacian (with or without teleporting).
i) The \emph{discrete consensus problem} on $G$ is the difference equation $x^{(n+1)}=Sx^{(n)}$.\\
ii) The \emph{discrete diffusion problem} or \emph{random walk} on $G$ is the difference
equation $p^{(n+1)}=p^{(n)}S$.
\label{def:diffusion-consensus2}
\end{definition}
\noindent Note that the second process is also known as a discrete-time finite-state Markov chain \cite{lev}.

In the above definition, $S$ can be either the matrix $D^{-1}Q$ or $D^{-1}Q_t$ that we defined
earlier. The difference is that in the former case, if the walker reaches
a vertex with in-degree zero, it will stay there forever, while in the latter case, the
walker will be sent to an arbitrary vertex (with uniform distribution) in the graph.
Hence the annotation ``with teleporting" for the corresponding Laplacian. Note that the random walker
moves in the direction \emph{opposite} the direction of the flow of information.

The (continuous) consensus problem can be characterized by two requirements. First,
we require that ${\bf 1}$, the vector of all ones, is an equilibrium.
Second, if ${\bf 1}_{\{i\}}$ is the vector that is 1 on the $i$th vertex and 0 everywhere else,
then $-{\cal L}{\bf 1}_{\{i\}}$ must be non-negative except on the $i$th vertex.
This requirement indicates that $i$ pulls other vertices in \emph{its} direction.
It is easy to see that the first requirement is equivalent to saying that ${\cal L}$ has row sum
zero, while the second indicates that off-diagonal components of ${\cal L}$ are non-positive.

A similar characterization can be given for the diffusion problem. First, we need this definition.

\begin{definition}
A probability vector or a (discrete) \emph{measure} on $G$ is a {row-vector}
in $\R^{V}$ such that
for all $i$, $p(i)\geq 0$ and $\sum_i\,p(i)=1$. The \emph{support}, $\emph{supp}(p)$, of the measure
$p$, is the set of vertices on which $p$ takes a positive value.
\label{def:prob.vector}
\end{definition}

The characterization of the diffusion problem is as follows. First, total probability is conserved, or
$\sum_i p_i=0$. Second, all components except the $i$th of $-{\bf 1}_{\{i\}}^T{\cal L}$ are non-negative,
because probability streams from vertex $i$ to other vertices. One immediately sees that this gives the same
requirements on ${\cal L}$. Similar characterizations can be given for the discrete equivalent
of these problems.

Thus the discrete versions of each process can be treated in pretty much the same way.
In the interest of brevity, we will limit the remainder of the exposition
to considering only the discrete diffusion problem and the continuous consensus problem.
The other two problems can be treated in the same way and with the same conclusions.
A more rigorous treatment of the discrete/continuous distinction is deferred to the appendix.

\begin{centering}
\section{The Right Kernel of $D-DS$}
\label{chap:prior}
\end{centering}
\setcounter{figure}{0} \setcounter{equation}{0}

\noindent In this section, we analyze the right kernel of the Laplacian.

\begin{lemma}
(i): There are \emph{no} edges from the complement $H_i^c$ of an exclusive set to $H_i$.\\
(ii): There are \emph{no} edges from the complement $B_i^c$ of a cabal to $B_i$.\\
(iii): The graph induced by a cabal $B_i$ is strongly connected.
\label{lem:no-incoming}
\end{lemma}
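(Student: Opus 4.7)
The plan is to establish the three parts in the stated order, using (i) to prove (ii) and (ii) to prove (iii).

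For (i), I would argue by contradiction. Suppose there is an edge $j \to i$ with $i \in H_i$ and $j \in H_i^c$. The key background fact to assemble is that every vertex $v$ lies in at least one reach: indeed $R(v)$ is a reachable set, and since $V$ is finite, $R(v)$ is contained in some maximal reachable set $R_\ell$, which is by definition a reach. Moreover, whenever $v \in R_\ell = R(r)$ for a root $r$, transitivity of $\rightsquigarrow$ gives $R(v) \subseteq R_\ell$. Apply this to $j$: letting $R_\ell$ be any reach containing $j$, the edge $j \to i$ gives $i \in R(j) \subseteq R_\ell$. But $i \in H_i$ says that $R_i$ is the unique reach containing $i$, forcing $R_\ell = R_i$, hence $j \in R_i$. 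Since $j \notin H_i$, this places $j$ in $C_i$, so $j$ also lies in some other reach $R_k \ne R_i$. The same argument with $R_k$ in place of $R_\ell$ then yields $i \in R_k$, contradicting $i \in H_i$.

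For (ii), suppose $j \to b$ with $b \in B_i$ and $j \in B_i^c$. Since $B_i \subseteq H_i$, part (i) rules out $j \in H_i^c$, so $j \in H_i \setminus B_i$. Then $j$ lies in $R_i$ but is not a root, which gives $R(j) \subseteq R_i$ and $R(j) \ne R_i$, i.e.\ $R(j) \subsetneq R_i$. On the other hand, $b$ is a root, so $R(b) = R_i$, and $b \in R(j)$ yields $R_i = R(b) \subseteq R(j)$, a contradiction.

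For (iii), take $b, b' \in B_i$. Both are roots of $R_i$, so $R(b) = R(b') = R_i$, producing paths $b \rightsquigarrow b'$ and $b' \rightsquigarrow b$ in $G$. By (ii), no edge enters $B_i$ from its complement, so walking any such path backwards from its endpoint in $B_i$, each preceding vertex is also forced into $B_i$. Therefore both paths live entirely in the subgraph induced by $B_i$, and $B_i$ is strongly connected.

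The main obstacle is the bookkeeping for (i): keeping straight that a reach is a maximal $R(\cdot)$ and not merely any reachable set, that every vertex sits inside some reach, and that $R(v) \subseteq R_\ell$ for any reach $R_\ell$ containing $v$. Once (i) is in hand, (ii) reduces to comparing $R(j)$ with $R_i$, and (iii) follows almost immediately from (ii).
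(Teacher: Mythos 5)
Your proof is correct and follows essentially the same route as the paper: reaches are closed under out-edges (giving (i)), roots reach the whole reach (giving (ii)), and (ii) forces the paths between roots to stay inside the cabal (giving (iii)). The only cosmetic difference is in (ii), where the paper directly shows the predecessor of a cabal vertex must itself be a root via maximality of the reach, while you first invoke (i) and then derive a contradiction from $R_i = R(b) \subseteq R(j) \subsetneq R_i$; both arguments are valid, and yours spells out the bookkeeping the paper leaves implicit.
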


\begin{proof} (i): Suppose that for $j\neq i$,  $v\in R_j$ and $vu \in E$. Then by the definition
of reach, $u\in R_j$. Thus, by the definition of exclusive set, $u\not\in H_i$.\\
(ii): Suppose $u \in B_i$ and $vu \in E$. By the definition of cabal, the entire reachable
from u and therefore from $v$. So $v\in B_i$.\\
(iii): Suppose $v$ and $w$ in $B_i$ and there is no path $v\rightsquigarrow w$ in $B_i$. Then
$v$ cannot be a root, which is a contradiction.
\end{proof}

\noindent
\begin{theorem} \cite{CV}
Suppose $M = D-DS$, where $D$ is a nonnegative $n \times n$ diagonal matrix
and $S$ is (row) stochastic. Suppose $G$ has $k$ reaches, denoted $R_1$ through $R_k$,
where we denote the exclusive and common parts of each $R_i$ by $H_i$, $C_i$ respectively.
Then the eigenvalue 0 has algebraic and geometric multiplicity $k$ and the
kernel of $M$ has a basis $\gamma_1$, $\gamma_2$, ... $\gamma_k$ in $\R^n$
whose elements satisfy:\\
(i) $\gamma_i(v) = 1$ for $v \in H_i$;\\
(ii) $\gamma_i(v) \in (0, 1)$ for $v\in C_i$;\\
(iii) $\gamma_i(v) = 0$ for $v \not \in R_i$;\\
(iv) $\sum_{i} \gamma_i = {\bf 1}_n$ (the all ones vector).
\label{thm:right-null}
\end{theorem}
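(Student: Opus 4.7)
The plan is to put $M$ in block triangular form via Lemma \ref{lem:no-incoming}, analyze each diagonal block separately, and construct the basis $\{\gamma_i\}$ by inversion on the bottom block. Order vertices as $H_1,\ldots,H_k,C$, where $C=V\setminus\bigcup_i H_i$ consists of the vertices lying in two or more reaches. Lemma \ref{lem:no-incoming}(i) forbids edges into $H_i$ from outside, so each $H_i$-row of $M$ is supported only on $H_i$-columns, yielding
\[
M \;=\; \begin{pmatrix} M_1 & & & 0 \\ & \ddots & & \vdots \\ & & M_k & 0 \\ N_1 & \cdots & N_k & M_0 \end{pmatrix},
\]
where each $M_i$ is again of the form $D-DS$ on the induced subgraph on $H_i$, itself a single-reach graph whose cabal $B_i$ is the unique source strong component (by Lemma \ref{lem:no-incoming}(ii)--(iii)). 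Block triangularity reduces the eigenvalue count to two sub-claims: (a) $0$ is a simple eigenvalue of each $M_i$ with eigenvector $\mathbf{1}_{H_i}$, and (b) $M_0$ is invertible.

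For (a), any $\gamma\in\ker M_i$ satisfies the harmonic equation $\gamma=S_i\gamma$ for the row-stochastic operator associated with $M_i$; a maximum-principle argument then forces $\gamma$ to be constant on the strongly connected recurrent class $B_i$, and constant throughout $H_i$ since the random walk from any $v\in H_i$ is absorbed in $B_i$. Simplicity follows from Perron--Frobenius, which supplies a positive left null vector of $M_i$ supported on $B_i$. The main obstacle is (b). Writing $M_0=D_0(I-\tilde S_0)$ with $\tilde S_0$ the $C\times C$ submatrix of $S$, I would show $\tilde S_0^n\to 0$ by ruling out any nontrivial closed subset $T\subseteq C$ of the random walk: if $u\in T$, then $u\in R_j$ for some $j$, and a $G$-path $r=v_0\to v_1\to\cdots\to v_m=u$ from a root $r\in B_j$ forces $v_{m-1},\ldots,v_0$ into $T$ by closedness, so $r\in T$. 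But $r\in H_j$ is disjoint from $C\supseteq T$, a contradiction. Hence $1\notin\operatorname{spec}(\tilde S_0)$ and $M_0$ is invertible.

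With (a) and (b) in hand, define $\gamma_i|_{H_i}=\mathbf{1}_{H_i}$, $\gamma_i|_{H_j}=0$ for $j\neq i$ (each a kernel element of the corresponding $M_j$), and $\gamma_i|_C=-M_0^{-1}N_i\mathbf{1}_{H_i}$, the value uniquely forced by the $C$-row block of $M\gamma_i=0$. Property (i) is built in, and the $\gamma_i$ are manifestly linearly independent from their disjoint support patterns on $\bigcup_j H_j$, so they form a basis of the $k$-dimensional kernel. Property (iv) follows by reading $M\mathbf{1}=0$ on the $C$-rows to get $\sum_i N_i\mathbf{1}_{H_i}=-M_0\mathbf{1}_C$, whence $\sum_i\gamma_i|_C=\mathbf{1}_C$. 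Properties (ii) and (iii) are then cleanest via the probabilistic interpretation: $\gamma_i(v)$ equals the absorption probability of the random walk started at $v$ into the closed strong class $B_i$ (the $B_j$'s being the only closed classes by Lemma \ref{lem:no-incoming}(ii)--(iii)). For $v\notin R_i$, no random walk path from $v$ reaches $B_i$, so $\gamma_i(v)=0$; for $v\in C_i$, random walk paths from $v$ to both $B_i$ (since $v\in R_i$) and to some $B_j$, $j\neq i$ (since $v\in R_j$ for some such $j$) give $0<\gamma_i(v)<1$.
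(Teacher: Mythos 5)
The paper does not prove this theorem: it is stated as imported from \cite{CV}, so there is no internal proof to compare against. Your argument is a correct, self-contained reconstruction along the same lines as the cited source --- block-triangularize $M$ by ordering the exclusive parts first, show $0$ is simple for each diagonal block $M_i$ and that the common-part block is invertible because the spectral radius of $S_{CC}$ is less than $1$ (this is exactly the fact the paper later quotes from the proof of Theorem 2.7 of \cite{CV} when proving Lemma \ref{lem:no-commoners}), then solve for $\gamma_i$ on $C$ by inverting $M_0$. The only points worth tightening are the tacit assumption that $D$ is strictly positive on the diagonal (needed to write $M_0=D_0(I-\tilde S_0)$ and to pass from $\ker M_i$ to the harmonic equation $\gamma=S_i\gamma$; a zero row of $M$ must be handled by declaring that vertex a root), and the observation that identifying $\gamma_i(v)$ with an absorption probability in your last step is legitimate only because you have already established that the $\gamma_i$ form a basis of the kernel with $\gamma_m\equiv\delta_{mj}$ on $B_j$.
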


\begin{theorem} \cite{agaev}, \cite{CV}
Any nonzero eigenvalue of a Laplacian matrix of the form $D-DS$, where
$D$ is nonnegative diagonal and $S$ is stochastic, has (strictly) positive real part.
\label{thm:positive}
\end{theorem}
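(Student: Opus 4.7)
The plan is to apply Gershgorin's circle theorem directly to $M = D - DS$ and observe that every Gershgorin disk sits in the closed right half-plane and touches the imaginary axis only at the origin.

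First I would write out the $i$-th Gershgorin disk explicitly. The diagonal entry of $M$ in row $i$ is $d_{ii}(1-s_{ii})$, which is non-negative since $d_{ii}\ge 0$ and $s_{ii}\le 1$ (as $S$ is row-stochastic). The off-diagonal entries in that row are $-d_{ii}s_{ij}$ for $j\ne i$, all non-positive, and the sum of their absolute values is
\[
\sum_{j\ne i} d_{ii}s_{ij} \;=\; d_{ii}(1-s_{ii}),
\]
which exactly matches the diagonal entry. Consequently the $i$-th Gershgorin disk is
\[
\bigl\{\,z\in\mathbb{C}\,:\,|z-d_{ii}(1-s_{ii})|\le d_{ii}(1-s_{ii})\,\bigr\},
\]
a closed disk on the non-negative real axis whose radius equals its center; it degenerates to the single point $\{0\}$ when $d_{ii}(1-s_{ii})=0$ (which includes the case $d_{ii}=0$ of a zero row).

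Next I would observe the crucial geometric fact: any disk of radius $r\ge 0$ centered at $r$ is contained in $\{\mathrm{Re}(z)\ge 0\}$ and intersects the imaginary axis in at most the single point $0$. By Gershgorin's theorem, every eigenvalue of $M$ lies in the union of these disks. Hence every eigenvalue has non-negative real part, and any eigenvalue on the imaginary axis must equal $0$. Equivalently, every nonzero eigenvalue of $M$ has strictly positive real part, which is the theorem.

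The argument is short and essentially routine; the only step that does real work is the row-stochasticity identity $\sum_{j\ne i}s_{ij}=1-s_{ii}$, which forces the coincidence between the radius and the center of each Gershgorin disk and thereby makes each disk tangent to (rather than crossing) the imaginary axis. I expect no substantive obstacle: the main thing to remember is to allow the degenerate case $d_{ii}=0$ so that no invertibility-type hypothesis on $D$ has to be imposed.
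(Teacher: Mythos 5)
Your Gershgorin argument is correct and complete: the row-stochasticity of $S$ forces each Gershgorin disk of $M=D-DS$ to be centered at $d_{ii}(1-s_{ii})\ge 0$ with radius equal to its center, and such a disk lies in the closed right half-plane and meets the imaginary axis only at the origin, so every eigenvalue has non-negative real part and the only possible eigenvalue on the imaginary axis is $0$. The paper itself gives no proof of this theorem, deferring to \cite{agaev} and \cite{CV}, and the argument given there is essentially this same Gershgorin-disk computation, so your proposal matches the intended (standard) route; your explicit handling of the degenerate case $d_{ii}(1-s_{ii})=0$ is a welcome detail.
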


The definitions of reach, cabal, exclusive part, and common part are illustrated in Section
\ref{chap:examples}. Here we wish to note that the reaches of even a large graph $G$ are relatively
easy to compute by first computing the condensation digraph $SC(G)$. This is the graph whose vertices
are the strong components $S_i$ of $G$ and $S_i\rightarrow S_j$ if there is an edge from $S_i$ to $S_j$.
Let $V_0$ be the vertices of $SC(G)$ with in-degree
0. Since $SC(G)$ has no directed cycles, there must be at least one such vertex.  Any such vertex
$v$ represents a cabal in $G$ by Lemma \ref{lem:no-incoming}.  We can then use breadth first
search from $v$ to find the reach of $v$ in $SC(G)$ and finally recover a reach in $G$.

Theorem \ref{thm:right-null} also shows that the underlying unweighted graph of $G$ gives a great
deal of information about the basis of the kernel. In particular, it determines $\gamma_i$ on
$H_i$ and on the complement of $R_i$. However, the exact value of $\gamma_i(v)$ if $v$ is a
vertex  in a common part is only determined once the weights on the edges are fixed.

\begin{centering}
\section{Random Walks on Directed Graphs}
 \label{chap:random-walks}
 \end{centering}
\setcounter{figure}{0} \setcounter{equation}{0}

\noindent In this section, we analyze random walks on $G$ and analyze its invariant measure in terms of the left kernel
of the Laplacian of $G$.

\begin{definition} The probability vector $p$ is an \emph{invariant probability measure}
(or a \emph{stationary distribution}) for $T$ if $pS=p$.
$K\subseteq V(G)$ is a forward invariant set under $T$ if $\emph{supp}(p)\subseteq K$ implies
$\emph{supp}(pS)\subseteq K$.
\label{def:random-walk}
\end{definition}

\begin{lemma}
Given a random walk random walk $T$, every exclusive set $H_i$ and its cabal $B_i$ are forward
invariant sets under $T$.
\label{lem:forward-invariant}
\end{lemma}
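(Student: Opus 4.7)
The plan is to show that mass cannot leak out of $H_i$ (resp.\ $B_i$) in a single step of the walk, by reading off a direct correspondence between nonzero entries of $S$ and edges in $E$, and then applying Lemma~\ref{lem:no-incoming}. Recall that $S = D^{-1}Q$ where $Q_{jk} = w_{kj}$ whenever $kj \in E$ and $0$ otherwise; thus for a non-dangling vertex $j$, $S_{jk} > 0$ if and only if $kj \in E$. This reverse-direction encoding is the whole point: the walker at $j$ can only move to those $k$ such that $k$ points into $j$ in $G$.

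For the exclusive set, suppose $\operatorname{supp}(p) \subseteq H_i$, and expand $(pS)_k = \sum_j p_j S_{jk}$. If $(pS)_k > 0$, then there is some $j$ with $p_j > 0$ (forcing $j \in H_i$) and $S_{jk} > 0$ (forcing $kj \in E$). By Lemma~\ref{lem:no-incoming}(i), no edge enters $H_i$ from its complement, so $kj \in E$ together with $j \in H_i$ forces $k \in H_i$. Hence $\operatorname{supp}(pS) \subseteq H_i$. The argument for $B_i$ is verbatim the same, with Lemma~\ref{lem:no-incoming}(ii) in place of (i); one only needs the fact that no edge of $G$ terminates inside $B_i$ from outside.

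The one subtlety — and what I would flag as the main obstacle if the reader insists on the teleporting convention — is the treatment of dangling vertices, i.e.\ rows of $Q$ that are identically zero. Under the convention $S_{jj} = 1$ for such $j$, the displacement at a dangling $j \in H_i$ merely keeps mass at $j$ itself, so the conclusion still holds. Under the teleporting convention $S_{jk} = 1/n$ for all $k$, mass at a dangling $j \in H_i$ is redistributed across all vertices, so the lemma can fail; the statement is therefore naturally read as pertaining to the non-teleporting walk $S = D^{-1}Q$ (with the self-loop fix on zero rows). No appeal to the structure of common parts or to strong connectedness of $B_i$ is needed for this direction; those refinements will enter only when the stationary measure of the walk restricted to $B_i$ is analyzed later.
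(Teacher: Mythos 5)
Your proof is correct and follows essentially the same route as the paper's: forward invariance holds because a walker stepping out of $H_i$ (or $B_i$) would require an edge into that set from its complement, which Lemma~\ref{lem:no-incoming} forbids. You simply make explicit the index reversal $S_{jk}>0 \iff kj\in E$ that the paper leaves implicit, and your added caveat about dangling vertices under the teleporting convention is a reasonable (and accurate) refinement rather than a deviation.
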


\begin{proof} A walker leaving $C_i$ means that the graph must have an edge into $C_i$.
This contradicts Lemma \ref{lem:no-incoming}. The same holds for $B_i$. \end{proof}

\begin{theorem} Let $G$ be a weighted digraph with Laplacian ${\cal L}$ and $k$ reaches.
The probability that a random walker under $T$ starting at $v$ is absorbed into the cabal
$B_r$ equals $\gamma_r(v)$ (defined in Theorem \ref{thm:right-null}).
\label{theo:walker}
\end{theorem}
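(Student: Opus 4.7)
The plan is to show the absorption probability $h_r(v)$ is itself a vector in $\ker {\cal L}$, and then pin it down inside the basis $\{\gamma_1,\dots,\gamma_k\}$ supplied by Theorem \ref{thm:right-null} by checking its values on the cabals. The random walk moves opposite to the edge direction, so by Lemma \ref{lem:no-incoming}(ii) there are no edges into $B_i$ from outside, which means that from the walker's perspective each cabal $B_i$ has no outgoing edges: once the walker enters $B_i$, it stays forever. Combined with Lemma \ref{lem:no-incoming}(iii), each $B_i$ is an absorbing recurrent class for the Markov chain $S$.

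The first step is to invoke standard finite Markov chain theory: every state outside $\bigcup_i B_i$ is transient, because from any such state there is a directed path (under the walk, i.e.\ against the edges of $G$) leading into some cabal. Indeed, consider the condensation $SC(G)$; following edges of $G$ backwards from any vertex $v$, one eventually reaches a strong component with no in-edges in $G$, which is a cabal. Since the walker must therefore leave the transient part with probability one, the random variable ``the cabal into which $v$ is absorbed'' is almost surely well defined, and $h_r(v) := \Pr(\text{absorbed into } B_r \mid \text{start at }v)$ satisfies $\sum_{j=1}^{k} h_j(v)=1$ for every $v$.

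Next, by a standard first-step argument conditioning on the walker's initial move, $h_r(v)=\sum_{u} S_{vu}\,h_r(u)$ for every $v$, i.e.\ $S h_r = h_r$, so $h_r \in \ker(I-S)=\ker {\cal L}$. By Theorem \ref{thm:right-null}, $h_r$ can be written uniquely as $h_r = \sum_{i=1}^{k} c_i\, \gamma_i$. Since $B_j \subseteq H_j$, properties (i) and (iii) of that theorem give $\gamma_i(v)=\delta_{ij}$ for every $v \in B_j$. Evaluating $h_r$ on any vertex of $B_j$ therefore yields $c_j = h_r(v)$, which equals $1$ if $j=r$ (the walker is already in $B_r$ and stays there) and $0$ if $j\neq r$ (the walker is trapped in $B_j$ and cannot reach $B_r$). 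Hence $c_r=1$ and $c_j=0$ for $j\neq r$, giving $h_r=\gamma_r$.

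The main obstacle is really the first step, namely making sure the absorption event has total probability one: one must exclude the possibility that the walker wanders forever among the transient vertices, and that the individual absorption probabilities $h_r(v)$ for $r=1,\dots,k$ add up to $1$ rather than less. Once this is established, the rest is a clean linear algebra argument using the explicit description of the basis vectors $\gamma_i$ on the cabals. A minor technical point is that the result should cover both the ${\cal L}=I-S$ and the teleporting ${\cal L}=I-S_t$ cases; but since the proof uses only that $S$ is row stochastic together with the structural Lemma \ref{lem:no-incoming}, it applies uniformly.
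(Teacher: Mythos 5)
Your proof is correct and follows essentially the same route as the paper's: a first-step (harmonicity) argument shows the absorption probability satisfies $Sq=q$, hence lies in $\ker{\cal L}$, and evaluating on the cabals (where $\gamma_i(v)=\delta_{ij}$ for $v\in B_j$) identifies the coefficients. Your extra care in establishing that absorption occurs with probability one is a welcome addition but not a departure; the paper obtains that fact afterwards from $\sum_i\gamma_i={\bf 1}$.
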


\begin{proof}
Let $q(j)$ be the probability that a random walker starting at $j\in V$ reaches
$B_m$ for some fixed $m$. (Note that $q$ is not a probability vector.)
Then $q:V\rightarrow [0,1]$ is well-defined and
is constant in time. Since, by Lemma \ref{lem:forward-invariant}, $B_m$ is forward invariant,
$q(j)$ is also equal to the probability that the walker starting at $j$ ends up and stays in $B_m$.

The probability $q(j)$ concerns the future (under $T$) of the walker on $j$.
Therefore it is equal to the appropriately weighted average of $q(i)$ of $j$'s
{successors under $T$}. Thus from Definition \ref{def:diffusion-consensus2},
\bsenn
q(j)=\sum_{i}\,\mbox{prob}(j\rightarrow i)q(i)= \sum_i\,S_{ji}q(i)\;.
\esenn
From this we conclude $q=Sq$, which is equivalent to ${\cal L}q=0$. Thus $q$ is in the kernel
of ${\cal L}$, and therefore
\bsenn
q(j)=\sum_m\,\alpha_m\gamma_m(j)\;.
\esenn
By Lemma \ref{lem:forward-invariant}, if $j$ is a vertex in $B_r$, then $q(j)=1$.
Also if $j$ is in any $B_m$ with $m\neq r$, then $q(j)=0$. Since the cabals are disjoint,
we must have that $\alpha_r=1$ and $\alpha_m=0$ if $m\neq r$. \end{proof}

We remark that it follows from Theorem \ref{thm:right-null}(iv) that the probability is one that a walker
will be absorbed in $B_r$ for some $r$.

\begin{lemma} Let $G$ be a digraph that has a reach $R$, consisting of an exclusive part $H$
which contains a cabal $B$, and a common part $C$. Under the random walk $T$ on $G$,
there is a unique invariant measure $p$ with support in $R$. Furthermore,
\emph{supp}$(p)$ equals $B$.
\label{lem:no-commoners}
\end{lemma}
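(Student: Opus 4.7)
The plan is to construct the invariant measure as the Perron--Frobenius stationary distribution of the restriction of $S$ to the cabal $B$, and then to use Theorem~\ref{theo:walker} to show that every invariant probability measure supported in $R$ must in fact be concentrated on $B$.

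For existence, I would begin from Lemma~\ref{lem:no-incoming}(iii) and Lemma~\ref{lem:forward-invariant}: the cabal $B$ induces a strongly connected subgraph of $G$ and is forward invariant under the walk. Forward invariance says that for each $v\in B$ the $v$-th row of $S$ is already supported in $B$, so the submatrix $S|_B$ is row stochastic; irreducibility of $S|_B$ follows because the walk graph is the edge reversal of $G$ and strong connectedness is preserved by reversal. Standard Perron--Frobenius theory then supplies a unique strictly positive probability vector $\pi$ on $B$ with $\pi S|_B=\pi$. Extending $\pi$ by zero on $V\setminus B$ gives a probability measure $p$ on $V$; forward invariance of $B$ forces $(pS)_j=0$ for $j\notin B$, so $p$ remains invariant under $S$ on all of $V$, and its support is $B\subseteq H\subseteq R$.

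For uniqueness and the identification of the support, let $p'$ be any invariant probability measure with $\mathrm{supp}(p')\subseteq R=R_i$. The key input is Theorem~\ref{theo:walker} together with Theorem~\ref{thm:right-null}(iv): because $\sum_r\gamma_r(v)=1$ for every vertex $v$, a walker starting at $v$ is almost surely absorbed in $\bigcup_r B_r$, hence $\sum_{u\notin\bigcup_r B_r}(S^n)_{vu}\to 0$ as $n\to\infty$. Since $V$ is finite and $p'S^n=p'$, the $p'$-mass of the complement of $\bigcup_r B_r$ is simultaneously constant in $n$ and tends to $0$, so it must be $0$, giving $\mathrm{supp}(p')\subseteq\bigcup_r B_r$. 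For $r\neq i$ the cabal $B_r\subseteq H_r$ is disjoint from $R_i$ by the definition of the exclusive part, so intersecting with $\mathrm{supp}(p')\subseteq R_i$ leaves only $B_i=B$. Then $p'|_B$ is an invariant probability measure for the irreducible stochastic matrix $S|_B$, and Perron--Frobenius uniqueness gives $p'=p$, while positivity of $\pi$ on $B$ identifies $\mathrm{supp}(p')=B$.

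I expect the absorption/convergence step to be the main obstacle, in the sense of conceptual content: it is the place where invariance of a measure under $S$ (a left-kernel statement) is coupled to the right-kernel picture encoded by the $\gamma_r$. Everything else --- Perron--Frobenius on $S|_B$, forward invariance of $B$, and the disjointness of distinct exclusive parts --- is routine once the dynamical input that probability mass drains into the cabals has been harnessed.
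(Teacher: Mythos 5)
Your proof is correct, and its existence half (Perron--Frobenius applied to the irreducible row-stochastic block $S_{BB}$, extended by zero using forward invariance of $B$) is the same as the paper's. Where you genuinely diverge is in ruling out mass on $R\setminus B$. The paper writes $S$ in block lower-triangular form with respect to $B$, $Y=R\setminus B$, $Z=V\setminus R$ (Lemma \ref{lem:no-incoming} gives $S_{BY}=S_{BZ}=0$, and the definition of a reach gives $S_{ZB}=S_{ZY}=0$), reads off from $pS=p$ the equation $b_Y S_{YY}=b_Y$, and invokes the fact --- imported from the proof of Theorem 2.7 of \cite{CV} --- that the spectral radius of $S_{YY}$ is strictly less than $1$, forcing $b_Y=0$. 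You instead obtain the same conclusion dynamically from Theorem \ref{theo:walker} together with Theorem \ref{thm:right-null}(iv): absorption into $\bigcup_r B_r$ occurs with probability one, so the mass that an invariant $p'$ assigns to the complement of $\bigcup_r B_r$ is both constant under $S^n$ and tends to $0$, hence vanishes. These are essentially equivalent inputs ($\rho(S_{YY})<1$ is precisely the statement that the walk leaves $Y$ almost surely), but your route stays within results already proved in this paper rather than citing the spectral bound from \cite{CV}, at the cost of resting on the more informal probabilistic argument behind Theorem \ref{theo:walker}; it also yields the slightly stronger global fact that every invariant measure is supported on $\bigcup_r B_r$. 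One step worth making explicit: passing from ``absorbed with probability one'' to $\sum_{u\notin\bigcup_r B_r}(S^n)_{vu}\to 0$ uses that the events ``the walker lies in $\bigcup_r B_r$ at time $n$'' are increasing in $n$ by forward invariance, so their probabilities converge to the probability of eventual absorption.
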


\begin{proof} Consider a reach $R$ with its cabal $B$ and denote
the vertex set $R\backslash B$ by $Y$ and the vertex set $V\backslash R$ by $Z$.
Since directed paths in $G$ cannot leave the reach $R$,
we have $S_{ZB}=S_{ZY}=0$. By Lemma \ref{lem:no-incoming}, $S_{BY}=S_{BZ}=0$. So
\bsenn
S=\begin{pmatrix} S_{BB} & {\bf 0} & {\bf 0}\\ S_{YB} & S_{YY} & S_{YZ}\\
{\bf 0} & {\bf 0} & S_{ZZ} \end{pmatrix}\;.
\esenn
We solve for $p$ in $pS=p$, where $p=(a_B,b_Y,c_Z)$ and assume $c_Z=0$. This gives
\bsenn
a_B S_{BB}+b_Y S_{YB}=a_B \; , b_Y S_{YY}=b_Y
 \logand  b_Y S_{ZY}+c_Z S_{ZZ}=c_Z \;.
\esenn
The proof of Theorem 2.7 in \cite{CV} establishes that the spectral radius of $S_{YY}$ is
strictly less than 1. Thus the middle equation can only be satisfied if $b_Y=0$.
Since $B$ is a strong component, $S_{BB}$ is irreducible. It follows directly from the Perron-Frobenius
theorem (see \cite{Boyle,Stern}) that the eigenvalue 1 is simple and its associated eigenvector is
strictly positive. \end{proof}

\begin{theorem} Let $G$ be a graph with Laplacian ${\cal L}=I-S$ with $k$ reaches.
Then the eigenvalue 0 of ${\cal L}$ has algebraic and geometric
multiplicity $k$. The left kernel of ${\cal L}$ has a basis
$\bar \gamma_1$, $\bar\gamma_2$, ... $\bar\gamma_k$ in $\R^n$
whose elements satisfy:\\
(i) For all $i\in \{1,\cdots k\}$ and all $v\in \{1,\cdots n\}$: $\bar\gamma_i(v)\geq 0$;\\
(ii) supp$(\bar \gamma_i)=B_i$;\\
(iii) $\sum_v\,\bar\gamma_i(v) =1$;\\
(iv) The vectors $\{\bar \gamma_i\}_{i=1}^k$ are orthogonal.
\label{thm:left-nullspace}
\end{theorem}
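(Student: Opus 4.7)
The plan is to let $\bar\gamma_i$ be the unique invariant probability measure whose support is contained in $R_i$, as supplied by Lemma~\ref{lem:no-commoners}. Since $\bar\gamma_i S = \bar\gamma_i$ is equivalent to $\bar\gamma_i \mathcal{L} = 0$, each such vector already lies in the left kernel of $\mathcal{L}$. Properties (i) and (iii) are then immediate from the fact that $\bar\gamma_i$ is a probability vector on $V$, and (ii) is precisely the conclusion $\mathrm{supp}(\bar\gamma_i) = B_i$ of Lemma~\ref{lem:no-commoners}.

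For (iv), I would first show that the supports $B_i$ are pairwise disjoint. Definition~\ref{def:root-set} gives $B_i \subseteq H_i$, and the exclusive parts are pairwise disjoint by Definition~\ref{def:reaches}: any $v \in H_i \cap H_j$ with $i \neq j$ would lie in both $R_i$ and $R_j$, contradicting $H_i = R_i \setminus \bigcup_{l\neq i} R_l$. With disjoint supports, $\langle \bar\gamma_i,\bar\gamma_j\rangle = \sum_v \bar\gamma_i(v)\bar\gamma_j(v) = 0$ for $i \neq j$, which is orthogonality in the standard Euclidean sense.

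To promote the $\bar\gamma_i$ to a basis and to confirm the multiplicity statement, I would combine three observations. First, the $\bar\gamma_i$ are linearly independent: evaluating a relation $\sum_i c_i \bar\gamma_i = 0$ at any $v \in B_j$ gives $c_j\bar\gamma_j(v) = 0$, and $\bar\gamma_j(v) > 0$ on $B_j$ by (ii). Second, by rank--nullity, $\dim \ker \mathcal{L}^T = \dim \ker \mathcal{L}$, and the latter equals $k$ by Theorem~\ref{thm:right-null}, so the $k$ independent vectors in fact span the left kernel. Third, since $\mathcal{L}$ and $\mathcal{L}^T$ share the same characteristic polynomial, the algebraic multiplicity of $0$ for $\mathcal{L}^T$ coincides with that for $\mathcal{L}$, which is again $k$ by Theorem~\ref{thm:right-null}.

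The substantive content is concentrated in Lemma~\ref{lem:no-commoners}, whose proof uses the block-triangular form of $S$, a spectral radius bound on $S_{YY}$, and the Perron--Frobenius theorem applied to $S_{BB}$. Granted that lemma, the present theorem is essentially bookkeeping. The point that deserves a careful word is that the reach-by-reach invariant measures, extended by zero outside $R_i$, are simultaneously invariant for the \emph{global} $S$; this uses Lemma~\ref{lem:no-incoming}(i) together with the block structure, which guarantee that no probability flows into $R_i$ from its complement. Once that is noted, the orthogonality claim reduces to the disjoint-support computation above, and no non-trivial inner product on $\R^n$ is required despite the non-symmetric setting.
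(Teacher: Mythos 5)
Your proposal is correct and follows essentially the same route as the paper: take the invariant measures supplied by Lemma~\ref{lem:no-commoners} as the $\bar\gamma_i$, observe that $\bar\gamma_i S=\bar\gamma_i$ puts them in the left kernel, get (i)--(iii) from the lemma, and get (iv) from the disjointness of the cabals, with the multiplicity statement imported from Theorem~\ref{thm:right-null}. You merely spell out a few steps the paper leaves implicit (the disjointness of the $B_i$ via $B_i\subseteq H_i$, and the rank--nullity argument promoting the $k$ independent vectors to a basis), which is fine.
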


\begin{proof} The first statement (the multiplicity of $0$) is in fact part of Theorem
\ref{thm:right-null}. For each reach $R_i$, Lemma \ref{lem:no-commoners} gives an invariant
measure $\bar\gamma_i$ satisfying (i), (ii), and (iii). Since
$\bar\gamma_iS=\bar\gamma_i$, we have that $\bar\gamma_i$ is a left null vector of ${\cal L}$.
These $k$ vectors are orthogonal, because the sets $B_i$ are mutually disjoint. \end{proof}

\begin{corollary} Let $G$ be a graph with Laplacian $L=D-DS$ with $k$ reaches. Then the left kernel of
$L$ has a basis $\left\{\bar \gamma_iD^{-1}\right\}_{i=1}^k$, where the $\{\bar \gamma_i\}_{i=1}^k$
are as in Theorem \ref{thm:left-nullspace}.
\label{cory:left-nullspace}
\end{corollary}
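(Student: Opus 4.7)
The plan is to reduce the corollary to Theorem \ref{thm:left-nullspace} via the factorisation
\[
L \;=\; D - DS \;=\; D(I-S) \;=\; D\,{\cal L},
\]
which is immediate from $DS = Q$ (so $L = D-Q = D-DS$). Given this factorisation, for any row vector $w$ we have $wL = (wD)\,{\cal L}$, so $w$ is a left null vector of $L$ if and only if $wD$ is a left null vector of ${\cal L}$.

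First I would check that $D$ is invertible. This is where the normalisation conventions from Section \ref{chap:definitions} enter: the paper assumes that any row of $Q$ that was originally zero has been replaced either by a self-loop (giving row sum $1$) or by teleporting (giving row sum $1$). Either way every diagonal entry $D_{ii}$ is strictly positive, so $D^{-1}$ exists and right-multiplication by $D$ is a bijection on $\mathbb{R}^n$. Consequently this map sends the left kernel of ${\cal L}$ bijectively onto the left kernel of $L$, and $\dim\ker_{\text{left}}L = \dim\ker_{\text{left}}{\cal L} = k$.

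Next I would write down the candidate basis. Applying the bijection $u \mapsto uD^{-1}$ to the basis $\{\bar\gamma_i\}_{i=1}^k$ supplied by Theorem \ref{thm:left-nullspace}, I get $\{\bar\gamma_i D^{-1}\}_{i=1}^k$. A direct one-line verification, $(\bar\gamma_i D^{-1})L = \bar\gamma_i D^{-1} D\,{\cal L} = \bar\gamma_i{\cal L} = 0$, confirms that each of these vectors lies in the left kernel of $L$. Linear independence follows by the reverse move: any dependence $\sum c_i \bar\gamma_i D^{-1} = 0$ gives $\sum c_i \bar\gamma_i = 0$ after right-multiplying by $D$, and the $\bar\gamma_i$ are independent (indeed, by Theorem \ref{thm:left-nullspace}(ii) they have pairwise disjoint supports $B_i$), forcing every $c_i=0$.

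There is no serious obstacle here. The only point worth flagging is the invertibility of $D$, which is a bookkeeping consequence of the normalisation conventions rather than a mathematical difficulty; everything else is algebra built on Theorem \ref{thm:left-nullspace}. The statement is really just the observation that $L$ and ${\cal L}$ differ by an invertible diagonal factor on the left, so their left kernels are related by right-multiplication by $D^{-1}$.
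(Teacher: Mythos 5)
Your proof is correct and follows essentially the same route as the paper, which simply asserts that the $\bar\gamma_i D^{-1}$ "easily" form a basis of the left kernel of $D-DS$; you have supplied the details (the factorisation $L=D\,{\cal L}$, the invertibility of $D$ under the paper's normalisation conventions, and the independence argument) that the paper leaves implicit.
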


\begin{proof} Construct the basis $\bar \gamma_1$, $\bar\gamma_2$, ... $\bar\gamma_k$
of the left kernel of $I-S$. Then it is easy to see
that the $\bar \gamma_i D^{-1}$ form a basis of the left kernel of $D-DS$.       \end{proof}

\begin{centering}
\section{The Left and Right Kernels of ${\cal L}$}
\label{chap:left-and-right}
\end{centering}
\setcounter{figure}{0} \setcounter{equation}{0}

\noindent We start this section with a general lemma and a definition that will be used at the end of the section.
Subsequently, we establish the dual relationship between the equilibria of the random walk
and those of the consensus problem.  In this context, the duality is very natural. The matrix
is multiplied from the \emph{left} by probability vectors (measures) and from the right
by vectors whose Laplacian is zero. The latter are also known as \emph{harmonic vectors or functions}
and have many other applications in graph theory.  Two notable examples focused on undirected graphs are \cite{lov} and \cite{bollob}.  The latter reference also discusses the duality mentioned above in the context of electrical networks.  At the end of the section, we briefly
note that the duality is independent of the discrete or continuous nature of the processes discussed.
This has also been observed by other authors, for example \cite{mas2} and \cite{mirz}.

\begin{lemma} Let $A$ be an $n\times n$ matrix. There are bases of (generalized) right eigenvectors
 (columns), $\{\eta_i\}_{i=1}^n$, and of (generalized) left eigenvectors (rows),
 $\{\bar\eta_i\}_{i=1}^n$, such that the matrices:
\bsenn
\Gamma = \left(\begin{array}{cccc} \eta_1& \eta_2 & \cdots & \eta_n\end{array}\right)
 \logand
\bar \Gamma = \left(\begin{array}{c} \bar\eta_1\\ \bar\eta_2\\ \vdots \\ \bar\eta_n\end{array}\right)
\esenn
are inverses of one another. We will assume the basis-vectors ordered according to ascending eigenvalue.
\label{lem:eigenvecs}
\end{lemma}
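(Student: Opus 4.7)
The plan is to reduce the statement to the Jordan normal form theorem applied to $A$. Over $\mathbb{C}$, every $n\times n$ matrix $A$ is similar to a matrix $J$ in Jordan normal form: there exists an invertible $\Gamma$ with $A\Gamma = \Gamma J$, where $J$ is block-diagonal with Jordan blocks $J_\lambda$ along the diagonal. The columns of $\Gamma$ are, by construction, a Jordan basis, i.e.\ a basis of generalized right eigenvectors of $A$. So one half of the lemma is immediate once Jordan form is invoked.

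Next I would define $\bar\Gamma := \Gamma^{-1}$, so that $\Gamma\bar\Gamma = \bar\Gamma\Gamma = I$ holds tautologically; this is the only content of the claim that $\Gamma$ and $\bar\Gamma$ are inverses. The substantive point is that the rows of this $\bar\Gamma$ are automatically a basis of generalized left eigenvectors of $A$. From $A\Gamma = \Gamma J$, left multiplication by $\bar\Gamma$ and right multiplication by $\bar\Gamma$ yields $\bar\Gamma A = J\bar\Gamma$. Reading this row by row: if the $j$-th column of $\Gamma$ corresponds to a Jordan chain for eigenvalue $\lambda$, then the $j$-th row $\bar\eta_j$ of $\bar\Gamma$ satisfies a right-action chain relation of the form $\bar\eta_j A = \lambda\bar\eta_j + \bar\eta_{j-1}$ (or $\bar\eta_j A = \lambda \bar\eta_j$ at the top of a block), which is exactly the defining property of a generalized left eigenvector. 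Hence $\{\bar\eta_i\}_{i=1}^n$ is a basis of generalized left eigenvectors attached to the same Jordan structure, and the pairing $\bar\eta_i\cdot\eta_j = \delta_{ij}$ is the biorthogonality $\bar\Gamma\Gamma=I$.

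Finally, to achieve the prescribed ordering by ascending eigenvalue, I would permute the Jordan blocks (together with the corresponding contiguous columns of $\Gamma$ and rows of $\bar\Gamma$) using any chosen total order on $\mathbb{C}$, e.g.\ lexicographic in $(\mathrm{Re}\,\lambda,\mathrm{Im}\,\lambda)$. A simultaneous permutation of columns of $\Gamma$ and rows of $\bar\Gamma$ preserves the identity $\bar\Gamma\Gamma=I$, so the ordered bases still satisfy the lemma.

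The only potential obstacle is purely bookkeeping: making sure that the Jordan-chain relation read off from $\bar\Gamma A = J\bar\Gamma$ really matches the standard definition of a generalized left eigenvector (chain direction and offset), and that the convention for ordering eigenvalues is harmless. Everything else is an immediate consequence of Jordan normal form, so I do not expect any genuine difficulty beyond this careful unwinding.
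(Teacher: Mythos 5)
Your proposal is correct and follows essentially the same route as the paper: invoke the Jordan decomposition $A\Gamma=\Gamma J$, set $\bar\Gamma=\Gamma^{-1}$ so that $\bar\Gamma A=J\bar\Gamma$, and read off the columns of $\Gamma$ and rows of $\bar\Gamma$ as generalized right and left eigenvectors respectively. The extra care you take with the chain relations and the eigenvalue ordering is sound but adds nothing beyond the paper's argument.
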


\begin{proof} This follows directly from the Jordan Decomposition Theorem \cite{HJ}.
Let $J$ be the Jordan normal form of $A$. Then that theorem tells us that there
is an invertible matrix $\Gamma$ such that
$A\Gamma=\Gamma J$ or $\Gamma^{-1} A= J \Gamma^{-1}$. Right multiply the first equation
by the standard column basis vector ${\bf 1}_{\{i\}}$ to show that the $i$th column of $\Gamma$
is a generalized right eigenvector.
Left multiply by ${\bf 1}_{\{i\}}^T$ to see that the $i$th row of $\Gamma^{-1}$ is a generalized
left eigenvector. \end{proof}

\begin{definition} Let $G$ have $k$ reaches. Let $\{\gamma_i\}_{i=1}^k$ be the (column) vectors of Theorem
\ref{thm:right-null} and $\{\bar\gamma_i\}_{i=1}^k$ the (row) vectors of Theorem \ref{thm:left-nullspace}.
Define
\bsenn
\Gamma^0 = \left(\begin{array}{cccc} \gamma_1& \gamma_2 & \cdots & \gamma_k\end{array}\right)
 \logand
\bar{\Gamma}^0 = \left(\begin{array}{c} \bar\gamma_1\\ \bar\gamma_2\\ \vdots \\ \bar\gamma_k\end{array}\right)\;.
\esenn
\label{defn:Gamma0}
\end{definition}

\begin{theorem} Let $G$ be a graph with random walk laplacian ${\cal L}=I-S$ with $k$ reaches $\{R_i\}_{i=1}^k$.
Let $p^{(0)}$ be an initial measure (at $t=0$). Then
\bsenn
\lim_{\ell\rightarrow \infty}\frac1\ell \sum_{i=0}^{\ell-1}\,T_*^ip^{(0)} =
\sum_{m=1}^k\,\left(p^{(0)}\gamma_m \right)\,\bar \gamma_m\;.
\esenn
Note that $(p^{(0)}\gamma_m)$ is a scalar. (Recall that $p$ is a row vector and $\gamma$ is a column vector.)
\label{thm:equil-random-walker}
\end{theorem}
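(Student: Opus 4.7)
The plan is to reduce the Cesaro limit to a spectral projection via the Jordan form of $S$, and then identify that projection explicitly using the biorthogonal pair of bases $\{\gamma_m\}$ and $\{\bar\gamma_m\}$. Since $T_*p = pS$, we have $T_*^i p^{(0)} = p^{(0)} S^i$, so it suffices to show that $\ell^{-1}\sum_{i=0}^{\ell-1} S^i \to \sum_{m=1}^k \gamma_m \bar\gamma_m$ and then premultiply by $p^{(0)}$. Using Lemma \ref{lem:eigenvecs} I would write $S = \Gamma J \bar\Gamma$ with $\bar\Gamma\Gamma = I$, which gives
\[
\frac{1}{\ell}\sum_{i=0}^{\ell-1} S^i \;=\; \Gamma\left(\frac{1}{\ell}\sum_{i=0}^{\ell-1} J^i\right)\bar\Gamma,
\]
reducing the problem to a block-by-block analysis of the Jordan form.

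The Jordan blocks of $S$ split into three kinds. At the eigenvalue $1$, Theorem \ref{thm:right-null} already furnishes algebraic $=$ geometric multiplicity $k$, so the $1$-block of $J$ is $I_k$ and its Cesaro mean equals $I_k$. At eigenvalues $\lambda$ with $|\lambda|<1$, the block powers decay geometrically, and so do their averages. The delicate case is $|\lambda|=1$ with $\lambda\ne 1$, which genuinely occurs for periodic walks such as the directed cycle: a Jordan block of size $r\ge 2$ at a unit-modulus eigenvalue would have entries growing like $n^{r-1}$ in its $n$th power, incompatible with the boundedness of $S^n$ (row stochasticity gives $\|S^n\|_\infty = 1$). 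Hence such blocks are $1\times 1$, and the scalar sum $\ell^{-1}\sum_{i=0}^{\ell-1}\lambda^i = \ell^{-1}(1-\lambda^\ell)/(1-\lambda)$ tends to $0$. Assembling the three cases, $\ell^{-1}\sum J^i$ converges to the matrix that is $I_k$ on the $1$-eigenblock and $0$ elsewhere.

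To close the argument I would choose the first $k$ columns of $\Gamma$ to be the $\gamma_m$ of Theorem \ref{thm:right-null} and the first $k$ rows of $\bar\Gamma$ to be the $\bar\gamma_m$ of Theorem \ref{thm:left-nullspace}; the identity $\bar\Gamma\Gamma = I$ then demands the biorthogonality $\bar\gamma_m\gamma_j = \delta_{mj}$, which I would verify directly from the structural descriptions. By Theorem \ref{thm:left-nullspace}(ii), $\mathrm{supp}(\bar\gamma_m) = B_m \subseteq H_m$; by Theorem \ref{thm:right-null}(i)--(iii), $\gamma_j$ vanishes outside $R_j$ and equals $1$ on $H_j$; since $H_m \cap R_j = \emptyset$ for $j \ne m$, the pairing vanishes off the diagonal, while on the diagonal $\bar\gamma_m\gamma_m = \sum_{v\in B_m}\bar\gamma_m(v)\cdot 1 = 1$ by Theorem \ref{thm:left-nullspace}(iii). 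Combining, the Cesaro limit of $S^i$ equals $\sum_{m=1}^k \gamma_m\bar\gamma_m$, and multiplying on the left by $p^{(0)}$ gives the stated formula. The main obstacle is the semisimplicity argument at nontrivial unit-modulus eigenvalues; without it one would obtain only subsequential convergence of $S^i$, and Cesaro averaging is precisely the device that smooths out the resulting periodicity.
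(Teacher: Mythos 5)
Your proof is correct, but it takes a genuinely different route from the paper's. The paper argues probabilistically: it invokes Theorem \ref{theo:walker} to identify the coefficient $p^{(0)}\gamma_m$ as the probability of absorption into the cabal $B_m$, applies the Perron--Frobenius theorem to each irreducible block $S_{B_mB_m}$ to classify the unit-modulus spectrum into the simple eigenvalue $1$ plus simple roots of unity, and then notes that Ces\`aro averaging annihilates the periodic and exponentially decaying parts of $T_*^ip^{(0)}$. You instead work entirely with the Jordan form of $S$: semisimplicity at $\lambda=1$ comes from Theorem \ref{thm:right-null}, semisimplicity at the remaining unit-modulus eigenvalues comes from the boundedness $\|S^n\|_\infty=1$ (a more uniform argument than block-by-block Perron--Frobenius, and one that does not require $\lambda$ to be a root of unity), and the identification of the limiting spectral projection as $\sum_m\gamma_m\bar\gamma_m$ comes from the biorthogonality $\bar\gamma_m\gamma_j=\delta_{mj}$, which you verify directly from the support descriptions in Theorems \ref{thm:right-null} and \ref{thm:left-nullspace}. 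That biorthogonality computation is precisely the identity the paper asserts without proof at the start of its proof of Theorem \ref{thm:consensus}, so your argument in effect unifies the diffusion and consensus results under a single spectral-projection calculation; what you give up is the probabilistic reading of the coefficients as absorption probabilities, which the paper obtains for free from Theorem \ref{theo:walker}.
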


\begin{proof} Given an initial measure $p^{(0)}$, then Theorem \ref{theo:walker} implies that
$\alpha_r\equiv p^{(0)}\gamma_r$ is the probability to be absorbed in the cabal $B_r$.
By Lemma \ref{lem:no-incoming}, the graph induced by $B_i$ is strongly connected.
This is equivalent with the matrix $S_{B_iB_i}$ being \emph{irreducible}.
The Perron-Frobenius theorem (see \cite{Boyle,Stern}) implies that $S_{B_iB_i}$ satisfies:\\
{\bf 1.} eigenvalue 1 has algebraic and geometric multiplicity 1, and\\
{\bf 2.} there are possibly other eigenvalues that are roots of 1, all with multiplicity 1, and\\
{\bf 3.} all eigenvalues not listed under cases 1 or 2, have absolute value less than 1.\\
The eigenvector corresponding to case 1 is the unique invariant measure in $B_r$.
For the eigenvector $p$ of case 2, we have
that $T_*^ip=\lambda^i p$ with $\lambda$ a root of unity. This implies that
\bsenn
\lim_{\ell\rightarrow \infty}\frac1\ell \sum_{i=0}^{\ell-1}\,T_*^ip=0\;.
\esenn
The same holds for case 3.

Thus for each $r$, the probability is $\alpha_r$ that a random walker is absorbed in $B_r$.
Thus for large $i$, $T_*^ip$ is a combination of
invariant measures in $B_r$, a finite number of periodic measures, and a remainder that
decays exponentially. The averaging cancels the periodic (case 2) and decaying (case 3) parts. Hence,
\bsenn
\lim_{\ell\rightarrow \infty}\frac1\ell \sum_{i=0}^{\ell-1}\,T_*^ip^{(0)} =
\sum_{m=1}^k\,\alpha_r\,\bar \gamma_m =
\sum_{m=1}^k\,\left(p^{(0)}\gamma_m \right)\,\bar \gamma_m\;.
\esenn
\end{proof}

\begin{corollary} Assume the hypotheses of Theorem \ref{thm:equil-random-walker}.
Suppose that in addition the following is true:
for each cabal $B_i$, there is a $k_i$ so that the $k_i$-th power of $S_{B_iB_i}$ is
strictly positive, then
\bsenn
\lim_{\ell\rightarrow \infty}\,T_*^\ell p^{(0)} =
\sum_{m=1}^k\,\left(p^{(0)}\gamma_m \right)\,\bar \gamma_m\;.
\esenn
\label{cor:equil-random-walker}
\end{corollary}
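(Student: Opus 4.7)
The plan is to revisit the proof of Theorem \ref{thm:equil-random-walker} and observe that the Ces\`aro average was forced there solely by ``case 2'' eigenvalues of each cabal block $S_{B_iB_i}$ --- roots of unity different from $1$. The added hypothesis is exactly the standard primitivity condition (a nonnegative irreducible matrix is primitive iff some power is strictly positive), so I intend to show that primitivity eliminates case 2, after which the pointwise limit exists without any averaging.

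First I would apply the Perron--Frobenius theorem for primitive nonnegative matrices (\cite{Boyle,Stern}) to each $S_{B_iB_i}$: the eigenvalue $1$ is simple and every other eigenvalue has modulus strictly less than $1$. Combined with Lemma \ref{lem:no-commoners}, whose proof shows that the diagonal block of $S$ on the non-cabal vertices of each reach has spectral radius strictly less than $1$ (the same Perron--Frobenius argument applies to \emph{any} irreducible, strictly substochastic non-cabal strong component), and with the block-triangular form of $S$ produced by a topological ordering of the strong components, this pins down the full spectrum of $S$: the eigenvalue $1$ with algebraic and geometric multiplicity exactly $k$ (from Theorem \ref{thm:left-nullspace}), together with eigenvalues of modulus strictly less than $1$. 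Next I would use Lemma \ref{lem:eigenvecs} to fix dual bases of left and right (generalized) eigenvectors. The $1$-eigenspaces are spanned on the right by $\gamma_1,\dots,\gamma_k$ and on the left by $\bar\gamma_1,\dots,\bar\gamma_k$, and they satisfy $\bar\gamma_m\gamma_{m'}=\delta_{mm'}$: on $\mathrm{supp}(\bar\gamma_m)=B_m\subseteq H_m$, Theorem \ref{thm:right-null}(i)(iii) forces $\gamma_{m'}$ to equal the Kronecker symbol, so the contraction reduces to $\sum_{v\in B_m}\bar\gamma_m(v)=1$ by Theorem \ref{thm:left-nullspace}(iii).

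With this biorthogonal decomposition in hand, I would write $p^{(0)}=\sum_m(p^{(0)}\gamma_m)\bar\gamma_m+r^{(0)}$, where $r^{(0)}$ lies in the span of the remaining left generalized eigenvectors (all with eigenvalues of modulus $<1$). Applying $T_*^\ell=S^\ell$ leaves the first sum fixed and sends $r^{(0)}T_*^\ell\to 0$ exponentially, giving the claimed limit. The main obstacle, as I see it, is not conceptual but bookkeeping: one must assemble the per-reach information from Lemma \ref{lem:no-commoners} into a genuine global block-triangular form of $S$ and check that the passage from primitivity of each cabal block to aperiodicity of the entire matrix really does preclude all unit-modulus eigenvalues other than the $k$-fold copy of $1$. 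Once that spectral picture is in place, the rest is a short calculation using biorthogonality.
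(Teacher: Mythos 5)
Your proposal is correct and follows essentially the same route as the paper: the paper's proof simply notes that the added hypothesis makes each $S_{B_iB_i}$ primitive, so the root-of-unity eigenvalues (``case 2'' in the proof of Theorem \ref{thm:equil-random-walker}) cannot occur and the remaining non-unit eigenvalues decay, making the Ces\`aro averaging unnecessary. You fill in the spectral and biorthogonality bookkeeping more explicitly than the paper does, but the underlying argument is the same.
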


\begin{proof} The proof of this statement is exactly the same as the proof of Theorem
\ref{thm:equil-random-walker}, except that now each matrix $S_{B_iB_i}$
is \emph{primitive}. Primitivity guarantees that case 2 of that proof does not occur.
For a (generalized) eigenvector $p$ as in case 3, $T_*^ip$ converges to 0. Hence the
averaging is not necessary. \end{proof}

\begin{theorem} Let $G$ be a graph with Laplacian ${\cal L}=I-S$ with $k$ reaches $\{R_i\}_{i=1}^k$.
Then the solution of the consensus problem of Definition \ref{def:diffusion-consensus1} with initial condition
(at $t=0$) $x^{(0)}$ satisfies
\bsenn
\lim_{t\rightarrow \infty}\, x^{(t)} =
\sum_{m=1}^k\,\left(\bar\gamma_m x^{(0)}\right)\,\gamma_m\;.
\esenn
Note that $(\bar\gamma_m x^{(0)})$ is a scalar.
\label{thm:consensus}
\end{theorem}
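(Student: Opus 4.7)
The plan is to write the solution of $\dot x = -\mathcal{L}x$ as $x(t) = e^{-\mathcal{L}t} x^{(0)}$ and expand $x^{(0)}$ in a biorthogonal eigenbasis supplied by Lemma \ref{lem:eigenvecs}. First I would invoke Theorem \ref{thm:right-null} and Theorem \ref{thm:left-nullspace} to put $\gamma_1,\dots,\gamma_k$ as the first $k$ columns of $\Gamma$ and $\bar\gamma_1,\dots,\bar\gamma_k$ as the first $k$ rows of $\bar\Gamma$, then fill out each basis with (generalized) eigenvectors for the remaining eigenvalues of $\mathcal{L}$, which by Theorem \ref{thm:positive} all have strictly positive real part.

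The key compatibility check is that the two candidate kernel bases are mutually biorthogonal, i.e.\ $\bar\gamma_i \gamma_j = \delta_{ij}$. This is a direct support calculation: Theorem \ref{thm:left-nullspace}(ii) says $\mathrm{supp}(\bar\gamma_i)=B_i\subseteq H_i$, while Theorem \ref{thm:right-null}(i),(iii) gives $\gamma_j\equiv 1$ on $H_j$ and $\gamma_j\equiv 0$ off $R_j$. Since $B_i\cap R_j=\emptyset$ for $i\neq j$, the cross pairings vanish, and for $i=j$ the pairing reduces to $\sum_{v\in B_i}\bar\gamma_i(v)\cdot 1 = 1$ by Theorem \ref{thm:left-nullspace}(iii). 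Once this is known, Lemma \ref{lem:eigenvecs} lets me extend to bases with $\bar\Gamma\Gamma = I$, after which the usual eigendecomposition gives
\bsenn
x^{(0)} = \sum_{i=1}^n (\bar\eta_i x^{(0)})\,\eta_i, \qquad e^{-\mathcal{L}t}x^{(0)} = \sum_{i=1}^n (\bar\eta_i x^{(0)})\, e^{-\mathcal{L}t}\eta_i.
\esenn

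Next I would split this sum into the kernel part ($i\le k$) and the rest. For the first $k$ terms, because Theorem \ref{thm:right-null} asserts that the algebraic and geometric multiplicities of $0$ coincide, the Jordan block at $0$ is diagonal, so $e^{-\mathcal{L}t}\gamma_m = \gamma_m$ and these terms contribute exactly $\sum_{m=1}^k (\bar\gamma_m x^{(0)})\,\gamma_m$ for all $t$. For $i>k$, the generalized eigenvector $\eta_i$ belongs to a Jordan block for some eigenvalue $\lambda$ with $\mathrm{Re}(\lambda)>0$ by Theorem \ref{thm:positive}, so $e^{-\mathcal{L}t}\eta_i$ is bounded by a polynomial in $t$ times $e^{-\mathrm{Re}(\lambda)t}$ and vanishes as $t\to\infty$.

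The one place that could trip me up is being careful about the Jordan structure at the eigenvalue $0$; without the algebraic$=$geometric multiplicity statement one would get a polynomially growing $t$ factor inside the kernel part and no convergence. That statement, however, is precisely the content of Theorem \ref{thm:right-null} (and is reiterated in Theorem \ref{thm:left-nullspace}), so once cited the argument closes cleanly and yields the claimed limit.
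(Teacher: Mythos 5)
Your proposal is correct and follows essentially the same route as the paper's proof: expand $x^{(0)}$ in the dual (generalized) eigenbases supplied by Lemma \ref{lem:eigenvecs}, use the triviality of the Jordan structure at $0$ (from Theorem \ref{thm:right-null}) to keep the kernel part fixed, and use Theorem \ref{thm:positive} to kill the remaining terms, identifying the coefficients as $\bar\gamma_m x^{(0)}$ via the biorthogonality $\bar\gamma_i\gamma_j=\delta_{ij}$. Your explicit support computation verifying that biorthogonality is a nice touch that the paper only asserts; otherwise the arguments coincide.
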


\begin{proof} The right and left eigenvectors of ${\cal L}$ defined in Theorems \ref{thm:right-null}
and \ref{thm:left-nullspace} satisfy $\bar \gamma_i \gamma_j=\delta_{ij}$ for $i,j\in\{1,\cdots k\}$.
According to Lemma \ref{lem:eigenvecs}, one can extend theses sets of vectors to
dual bases of right and left (generalized) eigenvectors $\{\gamma_\ell\}_{\ell=1}^n$ and
$\{\bar \gamma_\ell\}_{\ell=1}^n$ such that for $i\leq k$ and $\ell>k$
\bsenn
\bar \gamma_i \gamma_\ell=\bar \gamma_\ell \gamma_i = 0\;.
\esenn
Let $\lambda_i$ be the $i$th eigenvalue associated with the (generalized) eigenvector $\gamma_i$ (or
$\bar \gamma_i$). Now we consider the consensus problem with initial condition $x^{(0)}=y^{(0)}+z^{(0)}$ where
\bse
x^{(0)}=y^{(0)}+z^{(0)} \where
y^{(0)}=\sum_{i=1}^k\,\alpha_i \gamma_i  \logand
z^{(0)}=\sum_{i=k+1}^n\,\alpha_i \gamma_i \;.
\label{eq:initcondn}
\ese

From the standard theory of linear differential equations (see, for example, \cite{arnold}), one
easily derives that the general solution of the consensus problem is given by
\bse
x^{(t)}=e^{-{\cal L} t}x^{(0)}=\sum_{i=1}^n\, \beta_i \gamma_i e^{-\lambda_i t}p_i(t)\;,
\label{eq:soln-consensus}
\ese
where $\beta_i$ are constants. Here $p_i(t)$ are polynomials whose degree is less than the size of
the corresponding Jordan block. Furthermore, if $\gamma_i$ is an actual eigenvector (not a generalized one),
then $\alpha_i=\beta_i$.

By Theorem \ref{thm:right-null}, we have that in equation (\ref{eq:soln-consensus}), $\lambda_i=0$
for $i\in \{1,\cdots k\}$. Also, the eigenvalue zero has only trivial Jordan blocks and so
for $i\in \{1,\cdots k\}$, $p_i=1$ and $\beta_i=\alpha_i$.
By Theorem \ref{thm:positive}, we have that all the other terms (for $i>k$) converge to zero. Therefore,
using equation (\ref{eq:initcondn}),
\bsenn
\lim_{t\rightarrow \infty}\, x^{(t)} =
\sum_{m=1}^k\,\alpha_m\,\gamma_m =y^{(0)}\;.
\esenn

Next, we determine the $\alpha_i$. Let $\Gamma$ be the matrix whose $i$th column equals $\gamma_i$.
The previous equation implies that ($\gamma_i$ being he columns of $\Gamma$)
\bsenn
\Gamma\left(\sum_{i=1}^k\,\alpha_i {\bf 1}_{\{i\}} \right)=y^{(0)}  \Longrightarrow
\sum_{m=1}^k\,\alpha_m {\bf 1}_{m} = \bar \Gamma y^{(0)}\;.
\esenn
With the definition of $x^{(0)}$ and first paragraph of this proof, this implies that
\bsenn
\alpha_m=\bar \gamma_m y^{(0)} =\bar \gamma_m x^{(0)}\;,
\esenn
from which the result follows.
\end{proof}
\noindent An alternate proof of this result can be found in \cite{mirz}.

There is a striking way to express the duality between consensus and diffusion. In Theorem
\ref{thm:consensus},  the factor within the parentheses in $\left(\bar\gamma_m x^{(0)}\right)\,\gamma_m$
is a real number. Since multiplication of a real and vector is commutative, it can be written
as $\gamma_m\,\left(\bar\gamma_m x^{(0)}\right)$. But multiplication is associative,
and thus this is equal to $\left(\gamma_m\,\bar\gamma_m\right)\, x^{(0)}$.
The same reasoning works for the corresponding expression in Theorem \ref{thm:equil-random-walker}.
The notation becomes more compact upon observing that
$\left[\sum_{m=1}^k\gamma_m \,\bar \gamma_m\right]=\Gamma^0\bar{\Gamma}^0$. Thus we obtain the following.

\begin{corollary} Let $G$ be a graph with Laplacian ${\cal L}=I-S$ with $k$ reaches $\{R_i\}_{i=1}^k$.
Then the solution of the random walker with initial condition $p^{(0)}$ and of the consensus problem
with initial condition $x^{(0)}$, respectively, satisfy
\bsenn
\lim_{\ell\rightarrow \infty}\frac1\ell \sum_{i=0}^{\ell-1}\,T_*^ip^{(0)} =
p^{(0)}\,\Gamma^0\bar{\Gamma}^0  \logand
\lim_{t\rightarrow \infty}\, x^{(t)} =\Gamma^0\bar{\Gamma}^0\,x^{(0)}\;.
\esenn
Equivalently, $\lim_{\ell\rightarrow \infty}\frac1\ell \sum_{i=0}^{\ell-1}\,S^j=\lim_{t\rightarrow \infty}\,e^{-{\cal L}t}=\Gamma^0\bar{\Gamma}^0$.
\label{cory:walker+consensus}
\end{corollary}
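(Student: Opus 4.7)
The plan is to observe that this corollary is essentially a notational repackaging of Theorem \ref{thm:equil-random-walker} and Theorem \ref{thm:consensus}, so no new dynamics need to be analyzed. I will start from the two established limit formulas and rewrite them using the associativity of matrix-vector multiplication together with the definition of $\Gamma^0$ and $\bar{\Gamma}^0$.

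First I would take the consensus limit from Theorem \ref{thm:consensus}, namely $\lim_{t\to\infty} x^{(t)} = \sum_{m=1}^k (\bar\gamma_m x^{(0)})\,\gamma_m$. Because $(\bar\gamma_m x^{(0)})$ is a scalar, each summand equals $\gamma_m(\bar\gamma_m x^{(0)}) = (\gamma_m \bar\gamma_m)\, x^{(0)}$, where $\gamma_m \bar\gamma_m$ is the rank-one $n\times n$ outer product. Summing over $m$ and factoring out $x^{(0)}$ gives $\bigl[\sum_{m=1}^k \gamma_m \bar\gamma_m\bigr]\, x^{(0)}$. The key identification is that this bracketed sum is precisely the block product $\Gamma^0 \bar{\Gamma}^0$, since by Definition \ref{defn:Gamma0} the columns of $\Gamma^0$ are the $\gamma_m$ and the rows of $\bar{\Gamma}^0$ are the $\bar\gamma_m$, and the standard row-by-column formula for a product of an $n\times k$ and a $k\times n$ matrix gives exactly $\sum_{m=1}^k \gamma_m \bar\gamma_m$.

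The random-walk limit from Theorem \ref{thm:equil-random-walker} is handled symmetrically: write $(p^{(0)}\gamma_m)\,\bar\gamma_m = p^{(0)}(\gamma_m \bar\gamma_m)$, sum over $m$, and factor $p^{(0)}$ out on the left to get $p^{(0)} \Gamma^0 \bar{\Gamma}^0$. The final ``equivalently'' clause then follows by peeling off the arbitrary initial data: since the identities hold for every $x^{(0)} \in \R^n$ and every probability vector $p^{(0)}$ (and probability vectors span $\R^n$ after allowing signed combinations via linearity), the operator identities $\lim_{t\to\infty} e^{-\mathcal{L}t} = \Gamma^0 \bar{\Gamma}^0$ and $\lim_{\ell\to\infty} \tfrac1\ell \sum_{i=0}^{\ell-1} S^i = \Gamma^0 \bar{\Gamma}^0$ follow.

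There is really no hard step here; the main thing to be careful about is the bookkeeping of the outer-product identity $\sum_{m=1}^k \gamma_m \bar\gamma_m = \Gamma^0 \bar{\Gamma}^0$, which could be verified entrywise if desired: the $(i,j)$ entry of the left side is $\sum_m \gamma_m(i)\bar\gamma_m(j)$, which is exactly the $(i,j)$ entry of the product of an $n\times k$ matrix with columns $\gamma_m$ and a $k\times n$ matrix with rows $\bar\gamma_m$. Once this identification is in place, both limits of the corollary read off immediately from the two prior theorems.
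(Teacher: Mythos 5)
Your proposal is correct and matches the paper's own argument, which likewise obtains the corollary by rewriting $(\bar\gamma_m x^{(0)})\,\gamma_m$ as $(\gamma_m\bar\gamma_m)\,x^{(0)}$ via commutativity and associativity and then identifying $\sum_{m=1}^k \gamma_m\bar\gamma_m$ with the block product $\Gamma^0\bar\Gamma^0$. Your extra remark justifying the operator form of the ``equivalently'' clause by letting the initial data range over a spanning set is a harmless (and slightly more careful) addition to what the paper states.
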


Of course, the same result also holds for the discrete version of the consensus and the
continuous version of diffusion mentioned in Definitions \ref{def:diffusion-consensus1} and \ref{def:diffusion-consensus2}.  Note that the matrix $e^{-{\cal L} t}$ is well studied in the context of a continuous-time Markov chains where it is called the \emph{heat kernel}.  We refer the reader to \cite{lev} for more details.

\begin{centering}
\section{Application to Ranking Algorithms}
\label{chap:ranking}
\end{centering}
\setcounter{figure}{0} \setcounter{equation}{0}

\noindent In this section, we derive the pagerank algorithm using the consensus problem. This
explanation is the ``dual" of the usual one, which is in terms of the random walk.
We also show that pageranks ``with teleporting" can be easily expressed in terms
of pageranks ``without teleporting".

For a graph with vertex set $V$ and $U\subseteq V$, let ${\bf 1}_U$ denote the (column) vector that
has the value 1 on $U$ and 0 elsewhere. We set ${\bf 1}_V\equiv {\bf 1}$ (as before).
The second part of Corollary \ref{cory:walker+consensus} tells us that in the consensus
problem the final displacements when the initial condition is ${\bf 1}_{\{v\}}$ are equal to
$\left[ \sum_{m}\,\gamma_m \bar \gamma_m)\right]\,{\bf 1}_{\{v\}}$. As first noted in \cite{mas}, the \emph{mean} of these displacements over all vertices is a good measure of the influence that vertex $v$ exerts over the long term behavior of the solution to the consensus problem.
This naturally leads to the following definition.

\begin{definition} For a graph $G$ with $n$ vertices, the \emph{influence} of a vertex $v$ is given by
\bsenn
I(v)=\frac{{\bf 1}^T}{n}\,\Gamma^0 \bar{\Gamma}^0\,{\bf 1}_{\{v\}}\;.
\esenn
The \emph{influence vector} $I$ is the row vector whose $j$th component equals $I(j)$ or
\bsenn
I=\frac{{\bf 1}^T}{n}\,\Gamma^0 \bar{\Gamma}^0\;.
\esenn
\label{def:influence}
\end{definition}

From this we see that the influence of $v$ equals the average of the $v$th column of the
matrix $\Gamma^0 \bar{\Gamma}^0$. It is nonnegative. It is positive if and
only if some $\bar \gamma_m$ is non-zero at $v$, that is: if $v$ is part of
a cabal. The sum of all influences equals 1. Thus the influence vector is a probability measure.
It is straightforward to see that the influence of a subset $U$ of vertices equals:
$\frac{{\bf 1}^T}{n}\,\left[ \sum_{m}\,\gamma_m \bar{\gamma}_m\right]\,{\bf 1}_{U}$.

The fact that $I(v)>0$ only if $v$ is part of a cabal is not entirely realistic for networks
such as the internet.
A page may contain information even though it has both in- and out-links. That information, of
course, had to be put there by some other entity in the first place. Following \cite{mas}, one could say that, in fact,
each page $v$ has an associated (meta) page $b_v$ together with an edge $b_v\rightarrow v$.

\begin{definition} Let $G$ be a graph with $n$ vertices and stochastic weighted adjacency matrix $S$.
The \emph{extended graph} $E_\alpha[G]$ is defined as the graph $G$ together with one new vertex
$b_v$ and one new edge $b_v\rightarrow v$ of weight $\alpha>0$ for each $v\in V$.
\label{def:extended graph}
\end{definition}

The new graph $E_\alpha[G] $ has exactly $n$ reaches $\{\tilde{R}_i\}_{i=1}^n$ and $R_i$
the new vertex $b_i$ as leader. The natural choice for the weight $\alpha$ would appear
to be 1, but in actual applications it tends to have different values. So we keep it as a
(positive) parameter. In what follows we need to distinguish vectors in $\R^{2n}$ from those in $\R^n$.
To do that we mark the former with a tilde. The same device is used to distinguish $2n \times 2n$
matrices from $n\times n$ matrices. We label the vertices of $E_\alpha[G]$ in the following order:
$\{b_1,\cdots b_n, 1,\cdots n\}$. The first $n$ vertices are the leaders and this set
will be denoted by $B$. The second set of $n$ vertices we continue to refer to as $V$.

The Laplacian and the random walk Laplacian associated with $E_\alpha[G]$ are
\bsenn
\tilde{L} =\begin{pmatrix}  {\bf 0} & {\bf 0}\\ -\alpha I & (1+\alpha)I-S \end{pmatrix}
 \Longrightarrow\
\tilde{{\cal L}} =\begin{pmatrix}  {\bf 0} & {\bf 0}\\
 -\frac{\alpha}{1+\alpha} I & I-\frac{1}{1+\alpha} S \end{pmatrix}\;.
\esenn
The normalized Laplacian with teleportation is given in the same way, except that we use $S_t$
instead of $S$. We are now ready to discuss pagerank.  We begin with an alternate formulation.  In
Corollary \ref{cor:pagerank}, we will show how this is equivalent to the standard treatment
(as found, for example, in \cite{Stern}).

\begin{definition} For a graph $G$ with $n$ vertices, the \emph{pagerank} of a vertex $v$ is given by
\bsenn
\wp(v)=2\tilde{I}(b_v)-\frac1n\;.
\esenn
Here, $\tilde{I}$ is the influence of $b_v$ in the extended graph $E_\alpha[G]$.
The \emph{pagerank vector} $\wp\in \R^n$ is the row vector whose $j$th component equals $\wp(j)$.
\label{def:pagerank}
\end{definition}

\begin{theorem} Let $G$ be a graph with vertex set $V$, $|V|=n$, and Laplacian ${\cal L}=I-S$ or
${\cal L}=I-S_t$. The pagerank vector of Definition \ref{def:pagerank} is the unique probability
measure on $\R^n$ that satisfies
\bsenn
\wp = \dfrac{\alpha}{n}{\bf 1}^T(\alpha I+{\cal L})^{-1}\;.
\esenn
\label{thm:pagerank}
\end{theorem}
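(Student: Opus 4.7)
The plan is to apply the kernel descriptions of Sections \ref{chap:prior}--\ref{chap:random-walks} to the extended graph $E_\alpha[G]$, compute the leader influence $\tilde I(b_v)$ via Definition \ref{def:influence}, and translate to pagerank via Definition \ref{def:pagerank}.

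First I would identify the reach structure of $E_\alpha[G]$. Each leader $b_v$ has no incoming edges and is the unique root of the reach $\tilde R_v=\{b_v\}\cup R_G(v)$; since the $b_v$'s are pairwise not reachable from one another, none of these reaches is contained in another, so $E_\alpha[G]$ has exactly $n$ reaches, each with singleton cabal $\tilde B_v=\{b_v\}$. By Theorem \ref{thm:left-nullspace}(ii)--(iii) the left kernel basis is then $\bar{\tilde\gamma}_v={\bf 1}_{\{b_v\}}^T$, i.e.\ $\bar{\tilde\Gamma}^0=(I\;\;{\bf 0})$.

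For the right kernel I would exploit the block form of $\tilde{\cal L}$ displayed above. Write $\tilde\gamma_v$ with upper block $u\in\R^n$ (indexed by the leaders) and lower block $w\in\R^n$ (indexed by the original vertices); Theorem \ref{thm:right-null}(i),(iii) forces $u={\bf 1}_{\{v\}}$, and the lower block of $\tilde{\cal L}\tilde\gamma_v=0$ then reads $((1+\alpha)I-S)\,w=\alpha\,{\bf 1}_{\{v\}}$, equivalently $(\alpha I+{\cal L})w=\alpha\,{\bf 1}_{\{v\}}$. The operator $\alpha I+{\cal L}$ is invertible because by Theorem \ref{thm:positive} every nonzero eigenvalue of ${\cal L}$ has positive real part, so $-\alpha<0$ does not lie in its spectrum. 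Hence $w=\alpha(\alpha I+{\cal L})^{-1}{\bf 1}_{\{v\}}$ and
\bsenn
\tilde\Gamma^0=\begin{pmatrix} I\\ \alpha(\alpha I+{\cal L})^{-1}\end{pmatrix},\qquad
\tilde\Gamma^0\bar{\tilde\Gamma}^0=\begin{pmatrix} I & {\bf 0}\\ \alpha(\alpha I+{\cal L})^{-1} & {\bf 0}\end{pmatrix}.
\esenn
Reading off the $b_v$-column and summing its $2n$ entries gives $2n\,\tilde I(b_v)=1+\alpha\,{\bf 1}^T(\alpha I+{\cal L})^{-1}{\bf 1}_{\{v\}}$, which inserted into Definition \ref{def:pagerank} produces the claimed formula $\wp=\frac{\alpha}{n}{\bf 1}^T(\alpha I+{\cal L})^{-1}$.

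Finally I would check that this $\wp$ is indeed a probability measure. Non-negativity follows from the Neumann expansion $(\alpha I+{\cal L})^{-1}=(1+\alpha)^{-1}\sum_{k\ge 0}(1+\alpha)^{-k}S^k$, valid because $\|(1+\alpha)^{-1}S\|_\infty<1$; the identity ${\cal L}{\bf 1}=0$ gives $(\alpha I+{\cal L})^{-1}{\bf 1}=\alpha^{-1}{\bf 1}$, so $\wp\,{\bf 1}=1$. Uniqueness is automatic since the formula specifies $\wp$ explicitly. The only genuinely non-routine step in the whole plan is the invertibility of $\alpha I+{\cal L}$, which is the one place where Theorem \ref{thm:positive} is indispensable; everything else is block-matrix bookkeeping.
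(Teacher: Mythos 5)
Your proposal is correct and follows essentially the same route as the paper: identify the $n$ reaches of $E_\alpha[G]$ with singleton cabals $\{b_v\}$, read off $\tilde{\bar\Gamma}^0=(I\;\;{\bf 0})$ and the lower blocks $\eta_v=\alpha(\alpha I+{\cal L})^{-1}{\bf 1}_{\{v\}}$ from $\tilde{\cal L}\tilde\gamma_v=0$, and average over $V$. The only (harmless) divergence is that you verify the probability-measure property directly via the Neumann series and ${\cal L}{\bf 1}=0$, where the paper instead appeals to the influence interpretation and Theorem \ref{thm:consensus}; both arguments are sound.
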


\begin{proof} From the comments after Definition \ref{def:influence}, we know that the sum of the
influences equals 1. We know that the influence of non-leaders is zero and so
$\sum_v\,\left(2\tilde{I}(b_v)-\frac1n\right)=2-1=1$. Also $\wp(v)\geq 0$, because the
displacement of $b_v$ in the consensus problem is 1 and the others are non-negative
(Theorem \ref{thm:consensus}).

Sections \ref{chap:prior} and \ref{chap:random-walks} provides us with appropriate bases
$\{\tilde{\bar{\gamma}}_i\}_{i=1}^n$ and $\{\tilde{{\gamma}}_i\}_{i=1}^n$ of the left and right
kernels of $\tilde{\cal{L}}$. From Theorems \ref{thm:left-nullspace} and \ref{thm:right-null} we know that for $1\leq m\leq n$,
\bsenn
\tilde{\bar{\gamma}}_m=({\bf 1}_{\{m\}}^T,{\bf 0}^T)  \logand
\tilde{{\gamma}}_m= \begin{pmatrix}  {\bf 1}_{\{m\}}\\\eta_m \end{pmatrix}\;.
\esenn
With these constraints, $\tilde{\gamma}_m$ is entirely determined by $\tilde{L}\tilde{{\gamma}}_m=0$.
This is equivalent to
\bsenn
(\alpha I+ {\cal L})\eta_m=\alpha {\bf 1}_{\{m\}}\;.
\esenn
Since $\alpha>0$ and ${\cal L}$ has no negative eigenvalues, the matrix $(\alpha I+ {\cal L})$
can be inverted. Thus $\wp(m)$ is the average over $V$ of the displacements $\eta_m$:
\bsenn
\wp(m)=\frac{{\bf 1}^T}{\alpha}\eta_m= \dfrac{\alpha}{n}{\bf 1}^T(\alpha I+{\cal L})^{-1}{\bf 1}_{\{m\}}\;.
\esenn
The result follows immediately from this.
\end{proof}

We use the vectors $\tilde{\bar{\gamma}}_m$ and $\tilde{\bar{\gamma}}_m$ to define the matrices
$\tilde{\Gamma}^0$ and $\tilde{\bar{\Gamma}}^0$ analogous to Definition \ref{defn:Gamma0}. From Definition \ref{def:influence}, we obtain that
\bsenn
\tilde{I}(b_v)=\frac{\tilde{{\bf 1}}^T}{2n}\,\tilde{\Gamma}^0 \tilde{\bar{\Gamma}}^0\,\tilde{{\bf 1}}_{\{b_v\}}=
\frac{\tilde{{\bf 1}}_B^T+\tilde{{\bf 1}}_V^T}{2n}\,\tilde{\Gamma}^0 \tilde{\bar{\Gamma}}^0\,\tilde{{\bf 1}}_{\{b_v\}}\;.
\esenn
The initial condition $\tilde{{\bf 1}}_{\{b_v\}}$ in the consensus problem for $E_\alpha[G]$ displaces
no leaders, except $b_v$ itself by one unit. Thus
$\frac{\tilde{{\bf 1}}_V^T}{2n}\,\tilde{\Gamma}^0 \tilde{\bar{\Gamma}}^0\,\tilde{{\bf 1}}_{\{b_v\}}=\dfrac{1}{2n}$. Putting this together,
we see that
\bsenn
\wp(v)=\frac{\tilde{{\bf 1}}_B^T}{n}\,\tilde{\Gamma}^0 \tilde{\bar{\Gamma}}^0\,\tilde{{\bf 1}}_{\{b_v\}}\;.
\esenn
In other words, the pagerank of the vertex $v$ is in fact the mean over \emph{only} the vertices in
$V$ of the ``old" graph $G$ of the displacements in the consensus problem on $E_\alpha[G] $ caused
by initial condition ${\bf 1}_{\{b_v\}}$. We now turn to a different interpretation of the pagerank.

\begin{corollary} Let $G$ be a graph with vertex set $V$, $|V|=n$, and Laplacian ${\cal L}=I-S$.
The pagerank vector of Definition \ref{def:pagerank} is the probability measure on $\R^n$ that satisfies
\bsenn
\wp=\wp\left( \beta S+\dfrac{1-\beta}{n} J\right) =
\beta \wp S +\dfrac{1-\beta}{n}\,{\bf 1}^T =0\;,
\esenn
where $J$ is the all ones matrix and $\beta\equiv \dfrac{1}{1+\alpha}\in (0,1)$.
\label{cor:pagerank}
\end{corollary}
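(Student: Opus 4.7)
My plan is to derive the stated fixed-point equation directly from the characterization of $\wp$ established in Theorem \ref{thm:pagerank}, namely
\bsenn
\wp = \frac{\alpha}{n}{\bf 1}^T(\alpha I+{\cal L})^{-1}.
\esenn
First I would right-multiply both sides by $(\alpha I+{\cal L})$, which is invertible by the argument already given in the proof of Theorem \ref{thm:pagerank} (its spectrum lies in $\alpha + \{\text{eigenvalues of }{\cal L}\}$, all of which have strictly positive real part by Theorem \ref{thm:positive}, together with $\alpha>0$). This gives $\wp(\alpha I+{\cal L}) = \frac{\alpha}{n}{\bf 1}^T$.

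Next I would substitute ${\cal L} = I-S$ and regroup to obtain $\wp\bigl((1+\alpha)I - S\bigr) = \frac{\alpha}{n}{\bf 1}^T$. Dividing through by $1+\alpha$ and using $\beta = 1/(1+\alpha)$, so that $1-\beta = \alpha/(1+\alpha)$, this rearranges to
\bsenn
\wp \;=\; \beta\,\wp S \;+\; \frac{1-\beta}{n}{\bf 1}^T.
\esenn
This already matches the middle expression in the corollary's display.

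To recast the constant term $\frac{1-\beta}{n}{\bf 1}^T$ as $\frac{1-\beta}{n}\,\wp J$ (thereby producing the stochastic matrix $\beta S + \frac{1-\beta}{n}J$ on the right), I would invoke the fact, established in Theorem \ref{thm:pagerank}, that $\wp$ is a probability measure, so $\wp\,{\bf 1} = 1$. Writing $J = {\bf 1}\,{\bf 1}^T$, this gives $\wp J = (\wp\,{\bf 1})\,{\bf 1}^T = {\bf 1}^T$, and substituting back yields $\wp = \wp\bigl(\beta S + \frac{1-\beta}{n}J\bigr)$ as desired. Uniqueness as a probability measure follows because $\beta S + \frac{1-\beta}{n}J$ is a row-stochastic matrix whose associated directed graph is strongly connected (every entry of the $J$-term is strictly positive, so $1$ is a simple Perron eigenvalue), forcing a unique stationary distribution.

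There is no substantive obstacle here: the whole corollary is an exercise in algebraic manipulation of the identity of Theorem \ref{thm:pagerank} together with the normalization $\wp\,{\bf 1}=1$. The only point worth flagging carefully is that invertibility of $\alpha I+{\cal L}$ relies on Theorem \ref{thm:positive}, and that the rewriting ${\bf 1}^T = \wp J$ is precisely the place where the hypothesis that $\wp$ is a probability vector is used.
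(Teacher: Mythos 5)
Your proposal is correct and follows essentially the same route as the paper: both start from Theorem \ref{thm:pagerank}, right-multiply by $(\alpha I+{\cal L})$, substitute ${\cal L}=I-S$ and ${\bf 1}^T=\wp J$, and divide by $1+\alpha$ to introduce $\beta$. Your added remarks on invertibility and on uniqueness via primitivity of $\beta S+\frac{1-\beta}{n}J$ merely make explicit points the paper handles in the surrounding discussion.
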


\begin{proof} Since $\wp$ is a probability measure, we have that ${\bf 1}^T=\wp J$, where $J$ is the
all ones matrix. Substituting this in Theorem \ref{thm:pagerank} and right multiplying by
$(\alpha I + {\cal L})$ gives
\bsenn
\wp\left(\alpha I+{\cal L}-\dfrac\alpha n J\right) = 0\;.
\esenn
Substitute ${\cal L}=I-S$ and divide by $1+\alpha$ to get
\bsenn
\wp\left(I-\dfrac{1}{1+\alpha}S-\dfrac{\alpha}{n(1+\alpha)} J\right) = 0\;.
\esenn
Substituting $\beta$ for $\alpha$ gives the required statement.
\end{proof}

The traditional interpretation of the pagerank (as discussed in \cite{Stern})
is evident in this result.
The pagerank vector is the unique invariant probability measure that results from
applying the random walk with probability $\beta$ and uniform teleporting with
probability $1-\beta$. It is clear that the resulting matrix
$S_{pagerank}\equiv\beta S+\dfrac{1-\beta}{n} J$ is row stochastic and primitive.
Thus standard arguments using the Perron-Frobenius theorem show that the
eigenvalue 1 is simple and all other eigenvalues are strictly smaller. In fact,
since (generalized) eigenvectors of $S$ are eigenvectors of $J$, it follows that all other
eigenvalues have modulus less than $\beta$. The leading eigenvector is strictly positive.
Google's pagerank algorithm takes $\beta=0.85$ \cite{Stern}, and so after roughly 57 iterates,
convergence of $S_{pagerank}^n$ to the pagerank is already accurate to 4 decimal places.
The quick convergence plus the fact that $S_{pagerank}^n$ can be
cheaply computed without using the full matrix, guarantees efficient algorithms for
the computation of the pagerank vector.

The last result describes essentially two ranking algorithms, depending on how $S$
is defined. In the introduction, we gave the definition of $S$ and of $S_t$ with
teleporting. These give rise to pageranks $\wp$ and $\wp_t$. These two ranks
have an interesting relation to one another, essentially that of deck of
cards before and after one shuffle.

To see this, denote the set of vertices that are leaders by $L$ and the rest by $R$.
Thus
\bsenn
S=\begin{pmatrix} S_{LL} & S_{LR}\\ S_{RL} & S_{RR} \end{pmatrix}
\;, \wp=(\wp_L,\wp_R)\;, {\bf 1}^T=({\bf 1}^T_L, {\bf 1}^T_R)\;,  {\textrm etc}.
\esenn

\begin{proposition}With the above notation, we have
\benn
\wp_{t,L}&=&\left(\beta \pi_t+(1-\beta )\right)\wp_L\;,\\
\wp_{t,R}&=& \left(\dfrac{\beta}{1-\beta}\,\pi_t+1\right)\,\wp_R\;,
\eenn
where $\pi_t=\sum_{j\in L}\,\wp_t(j)$.
\label{prop:pagerank}
\end{proposition}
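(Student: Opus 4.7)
The plan is to derive both identities directly from the fixed-point characterization in Corollary \ref{cor:pagerank}, applied to $S$ in one case and to $S_t$ in the other. These give
$\wp = \beta \wp S + \frac{1-\beta}{n}\mathbf{1}^T$ and $\wp_t = \beta \wp_t S_t + \frac{1-\beta}{n}\mathbf{1}^T$. I would write both in block form with respect to the partition $V = L \cup R$.

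The structural inputs needed are the following. Since vertices of $L$ are exactly the dangling nodes (whose original adjacency row vanished), the ``$Q_{ii}=1$'' convention yields $S_{LL} = I$ and $S_{LR} = \mathbf{0}$, whereas the teleporting convention yields $S_{t,LL} = \frac{1}{n}\mathbf{1}_L \mathbf{1}_L^T$ and $S_{t,LR} = \frac{1}{n}\mathbf{1}_L \mathbf{1}_R^T$; non-leader rows are left alone, so $S_{RL} = S_{t,RL}$ and $S_{RR} = S_{t,RR}$. Combined with the identity $\wp_{t,L}\mathbf{1}_L = \pi_t$, the teleporting contribution from the $L$-rows collapses to a scalar multiple of the all-ones vectors: $\wp_{t,L}S_{t,LL} = \frac{\pi_t}{n}\mathbf{1}_L^T$ and $\wp_{t,L}S_{t,LR} = \frac{\pi_t}{n}\mathbf{1}_R^T$.

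With these reductions in hand, the $R$-block equations become $\wp_R(I - \beta S_{RR}) = \frac{1-\beta}{n}\mathbf{1}_R^T$ and $\wp_{t,R}(I - \beta S_{RR}) = \frac{\beta \pi_t + (1-\beta)}{n}\mathbf{1}_R^T$. Because $\beta \in (0,1)$ and $S_{RR}$ is substochastic (as in the proof of Lemma \ref{lem:no-commoners}, its spectral radius is strictly less than $1$), the matrix $I - \beta S_{RR}$ is invertible, so the two solutions are proportional; this gives $\wp_{t,R} = \frac{\beta\pi_t + 1 - \beta}{1-\beta}\,\wp_R$, which is the second claimed identity. Turning to the $L$-block, the condition $S_{LL}=I$ reduces the $\wp$-equation to $(1-\beta)\wp_L = \beta \wp_R S_{RL} + \frac{1-\beta}{n}\mathbf{1}_L^T$, while the $\wp_t$-equation reads $\wp_{t,L} = \beta \wp_{t,R} S_{RL} + \frac{\beta \pi_t + (1-\beta)}{n}\mathbf{1}_L^T$. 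Substituting the just-derived proportionality for $\wp_{t,R}$ and factoring out $\beta\pi_t + (1-\beta)$ on the right-hand side recovers $\wp_{t,L} = (\beta \pi_t + (1-\beta))\wp_L$, the first identity.

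There is no real obstacle here; the argument is careful block bookkeeping plus one clean factorization. The one point worth highlighting is that $\pi_t$ itself depends on $\wp_t$, so the two identities should be read as consistency relations (with $\pi_t$ treated as the scalar $\wp_{t,L}\mathbf{1}_L$) rather than as a closed-form construction of $\wp_t$ from $\wp$; self-consistency is automatic because Theorem \ref{thm:pagerank} guarantees that $\wp_t$ exists uniquely as a probability measure.
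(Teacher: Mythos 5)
Your proof is correct and follows essentially the same route as the paper: block-decompose the fixed-point equation of Corollary \ref{cor:pagerank} over $L\cup R$, use $S_{LL}=I$, $S_{LR}=0$ versus $S_{t,LL}=\frac1n J_{LL}$, $S_{t,LR}=\frac1n J_{LR}$ to collapse the teleporting terms to multiples of $\pi_t$, solve the $R$-block first and substitute into the $L$-block (the paper writes out the resolvent $(I-\beta S_{RR})^{-1}$ explicitly where you just use proportionality, a cosmetic difference). One small caveat: your parenthetical claim that $S_{RR}$ has spectral radius strictly less than $1$ is not needed and not guaranteed in general (non-singleton cabals would sit in $R$); the paper only invokes $\beta\in(0,1)$ together with $S$ having spectral radius $1$, which already gives invertibility of $I-\beta S_{RR}$.
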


\begin{proof}
Corollary \ref{cor:pagerank} says:
\benn
\wp_L(I-\beta S_{LL}) = \beta \wp_R S_{RL} + \dfrac{1-\beta}{n}\,{\bf 1}_L^T\;,\\
\wp_R(I-\beta S_{RR}) = \beta \wp_L S_{LR} + \dfrac{1-\beta}{n}\,{\bf 1}_R^T\;.\\
\eenn
In the case without teleporting, we get:
\bsenn
S_{LL}=I_{LL}  \logand  S_{LR}=0 \;.
\esenn
With teleporting, this becomes:
\bsenn
S_{LL}=\dfrac 1n J_{LL}  \logand  S_{LR}=\dfrac 1n J_{LR} \;.
\esenn
The former will not receive a subscript, the latter will be denoted with the subscript
``t''. Denote $\pi=\sum_{j\in L}\,\wp(j)$ and $\pi_t=\sum_{j\in L}\,\wp_t(j)$.
Thus, without teleporting, the equations of Corollary \ref{cor:pagerank} become:
\bsenn
\begin{array}{ccl}
\wp_L &=& \dfrac{\beta}{1-\beta}\, \wp_R S_{RL} + \dfrac{1}{n}\,{\bf 1}_L^T\;,\\[0.3cm]
\wp_R &=& \dfrac{1-\beta}{n}\,{\bf 1}_R^T(I-\beta S_{RR})^{-1}\;.\\
\end{array}
\esenn
Since $S$ has spectral radius 1 and $\beta\in (0,1)$, the inverse is well-defined.
Noting that $\wp_{t,L} J_{LR}=\pi_t {\bf 1}_R$ and $\wp_{t,L} J_{LL}=\pi_t {\bf 1}_L$,
the equations \emph{with} teleporting become
\bsenn
\begin{array}{ccl}
\wp_{t,L} &=& \beta \wp_{t,R} S_{RL} +
\dfrac{1}{n}\,\left(\beta\,\pi_t+(1-\beta)\right)\,{\bf 1}_L^T\;,\\
\wp_{t,R} &=& \dfrac{1-\beta}{n}\,\left(\dfrac{\beta}{1-\beta}\,\pi_t+1\right)\,
{\bf 1}_R^T(I-\beta S_{RR})^{-1}\;.
\end{array}
\esenn
This gives the required relation between $\wp_{t,R}$ and $\wp_R$.
Substituting these expressions into $\wp_L$ and $\wp_{t,L}$, respectively, gives:
\bsenn
\begin{array}{ccl}
\wp_{L} &=& \dfrac{\beta}{n}\,{\bf 1}_R^T\,(I-\beta S_{RR})^{-1}\,S_{RL}+
\dfrac{1}{n}\,{\bf 1}_L^T\;.\\[0.3cm]
\wp_{t,L} &=& \dfrac{\beta}{n}\,\left(\beta\,\pi_t+(1-\beta)\right)\,{\bf 1}_R^T\,
{\bf 1}_R^T(I-\beta S_{RR})^{-1}\,S_{RL} + \dfrac{1}{n}\,\left(\beta\,\pi_t+(1-\beta)\right)\,
{\bf 1}_L^T\;.
\end{array}
\esenn
And this yields the relation between $\wp_{t,L}$ and $\wp_L$.
\end{proof}

To express $\wp_t$ completely in terms of $\wp$, we add the following result.

\begin{corollary} With the above notation, we also have
\benn
\begin{array}{ccl}
\pi_t &=& \dfrac{(1-\beta)\pi}{1-\beta \pi}\;.
\end{array}
\eenn
\label{cor:pagerank2}
\end{corollary}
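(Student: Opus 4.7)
The plan is to sum the first identity of Proposition \ref{prop:pagerank} over the leader set $L$, which collapses the vector identity into a scalar equation in $\pi$ and $\pi_t$ that can be solved directly for $\pi_t$.

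More concretely, I would start from
\bsenn
\wp_{t,L}=\bigl(\beta \pi_t+(1-\beta)\bigr)\wp_L,
\esenn
and take the sum of both sides over the coordinates in $L$. Since $\beta\pi_t+(1-\beta)$ is a scalar (not depending on the coordinate), the right-hand side contributes $\bigl(\beta\pi_t+(1-\beta)\bigr)\pi$, while by definition the left-hand side contributes $\pi_t$. This gives the single scalar equation
\bsenn
\pi_t=\bigl(\beta\pi_t+(1-\beta)\bigr)\pi.
\esenn

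Solving for $\pi_t$ is then routine algebra: group the $\pi_t$ terms to get $\pi_t(1-\beta\pi)=(1-\beta)\pi$, and divide by $1-\beta\pi$, which is positive because $\beta\in(0,1)$ and $\pi\in[0,1]$. This yields the claimed formula $\pi_t=\dfrac{(1-\beta)\pi}{1-\beta\pi}$.

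There is essentially no obstacle here; the only point to check is that $1-\beta\pi\neq 0$, which follows immediately from $\beta<1$ and $\pi\le 1$ (with equality $\pi=1$ only in degenerate cases that still give $1-\beta\pi=1-\beta>0$). As a consistency check one could instead sum the second identity of Proposition \ref{prop:pagerank} over $R$, using $\sum_{j\in R}\wp(j)=1-\pi$ and $\sum_{j\in R}\wp_t(j)=1-\pi_t$; the same formula for $\pi_t$ in terms of $\pi$ drops out after a short manipulation, confirming the result.
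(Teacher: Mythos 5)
Your proposal is correct and takes essentially the same approach as the paper: both sum the identities of Proposition \ref{prop:pagerank} to obtain scalar equations in $\pi$ and $\pi_t$. The only (harmless) difference is that you use just the first identity and solve $\pi_t=(\beta\pi_t+(1-\beta))\pi$ directly, whereas the paper sums both identities and divides one resulting equation by the other; the algebra is equivalent and your observation that $1-\beta\pi>0$ is the right justification for the final division.
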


\begin{proof} Summing the coefficients on the left and right hand sides of
the equations of Proposition \ref{prop:pagerank}, and taking into account that
$\sum_{j\in R}\,\wp_(j)=1-\pi$ (and similar for $\pi_t$), we get
\benn
\begin{array}{ccl}
\pi_t &=& \left(\beta \pi_t +(1-\beta)\right)\pi\\[0.3cm]
1-\pi_t &=& \dfrac{1}{1-\beta}\,\left(\beta \pi_t +(1-\beta)\right)(1-\pi)\;.
\end{array}
\eenn
Dividing the first equation by the second gives
$\dfrac{\pi_t}{1-\pi_t}=(1-\beta)\,\dfrac{\pi}{1-\pi}$,
which is equivalent to the statement of the Corollary. \end{proof}

\begin{centering}
\section{A Simple Example}
\label{chap:examples}
\end{centering}
\setcounter{figure}{0} \setcounter{equation}{0}

\noindent In this section, we illustrate the definitions and results with the graph given in Figure \ref{fig:graph-example}.
The unweighted Laplacian matrix equals (row and column $i$ correspond to vertex $i$)
\bsenn
M=  \left( \begin {array}{ccccccc} 0&0&0&0&0&0&0\\ -1&1&0&0&0&0&0\\ 0&0&1&0&-1&0&0\\
0&0&-1&1&0&0&0\\ 0&0&0&-1&1&0&0\\ -1&0&0&0&0&2&-1\\ 0&0&-1&0&0&-1&2\end {array} \right)\;.
\esenn
By ordering the vertices so that we list the exclusive part first, we see that the matrix
$M$ is lower block-triangular. In the exclusive blocks, we list the vertices in the cabal
first, and then the others. The exclusive block itself is then also lower block-triangular.

\begin{figure}[pbth]
\begin{center}
\includegraphics[width=5.0in,height=1.5in]{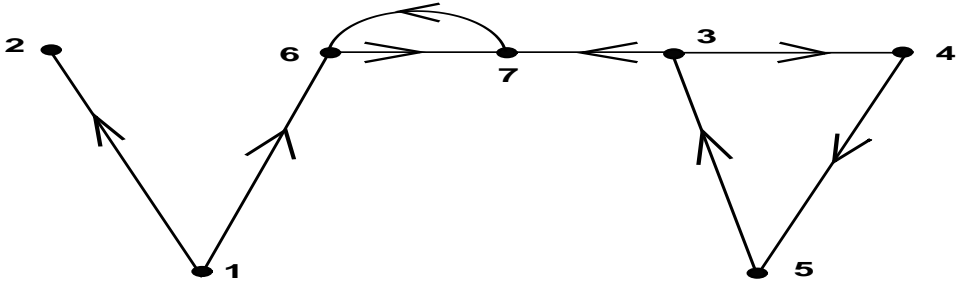}
\caption{\emph{A graph (vertices as labeled) with two reaches $R_1=\{1,2,6,7\}$
and $R_2=\{3,4,5,6,7\}$.
Each reach $R_i$ has cabal $B_i$, exclusive part $H_i$, and common part $C_i$.
According to the definitions in the text: $B_1=\{1\}$, $H_1=\{1,2\}$, $C_1=\{6,7\}$,
$B_2=\{3,4,5\}$, $H_2=\{3,4,5\}$, $C_2=\{6,7\}$.}}
\label{fig:graph-example}
\end{center}
\end{figure}

The algorithm at the beginning of Section \ref{chap:random-walks} gives the stochastic matrix $S$ as
\bsenn
S =  \left( \begin {array}{ccccccc} 1&0&0&0&0&0&0\\ 1&0&0&0&0&0&0\\ 0&0&0&0&1&0&0\\
0&0&1&0&0&0&0\\ 0&0&0&1&0&0&0\\ 1/2&0&0&0&0&0&1/2\\ 0&0&1/2&0&0&1/2&0\end {array} \right)
\logand
{\cal L} =\left( \begin {array}{ccccccc} 0&0&0&0&0&0&0\\ -1&1&0&0&0&0&0\\ 0&0&1&0&-1&0&0\\
0&0&-1&1&0&0&0\\ 0&0&0&-1&1&0&0\\ -1/2&0&0&0&0&1&-1/2\\ 0&0&-1/2&0&0&-1/2&1\end {array} \right)\;.
\esenn
The stochastic matrix with teleporting $S_t$ is nearly the same: the second row entries are replaced
by 1/7.

The bases of the left and right kernels of the Laplacian ${\cal L} = I - S$ of Theorems
\ref{thm:right-null} and \ref{thm:left-nullspace} are, respectively,
\bsenn
\begin{array}{c}
\gamma_1^T  = \left(\begin{array}{ccccccc}1,& 1,& 0,& 0,& 0,& \frac23,&\frac13\end{array}\right)
 \logand
\gamma_2^T  = \left(\begin{array}{ccccccc}0,& 0, & 1, & 1, &, 1,& \frac13 ,&\frac23\end{array}\right) \;,\\[.2in]
\bar\gamma_1= \left(\begin{array}{ccccccc}1, & 0,& 0,& 0, & 0,& 0,& 0 \end{array}\right)
 \logand
\bar\gamma_2=\left(\begin{array}{ccccccc}0,& 0,&\frac13,&\frac13,&\frac13& 0,& 0 \end{array}\right) \;.
\end{array}
\esenn
The general solutions to both the random walk and the consensus problems are given in terms
of these vectors as in Theorems \ref{thm:equil-random-walker} and \ref{thm:consensus}. Note that
for the former, Corollary \ref{cor:equil-random-walker} does not hold.
In this example, the matrix $\left[\frac{1}{n} \sum_{m}\,\gamma_m \bar \gamma_m(v)\right]$ is:
\bsenn
\frac{1}{7\cdot 9} \, \left( \begin {array}{ccccccc} 9&0&0&0&0&0&0\\ 9&0&0&0&0&0&0\\ 0&0&3&3&3&0&0\\
0&0&3&3&3&0&0\\ 0&0&3&3&3&0&0\\ 6&0&1&1&1&0&0\\ 3&0&2&2&2&0&0 \end {array} \right)\;.
\esenn
Thus the influence vector is
\bsenn
\left(\begin{array}{ccccccc}\frac37,& 0,& \frac{4}{21},& \frac{4}{21},& \frac{4}{21}, &0,& 0 \end{array}\right)
\esenn

Next, we solve for the pageranks with $\alpha=1$ (or $\beta=\frac12$). Without teleporting we get
\bsenn
(I+{\cal L})^{-1}= \dfrac{1}{14\cdot 15}\,
\left( \begin {array}{ccccccc} 210&0&0&0&0&0&0\\105&105&0&0&0&0&0\\ 0&0&120&30&60&0&0\\
0&0&60&120&30&0&0\\ 0&0&30&60&120&0&0\\56&0&8&2&4&112&28 \\14&0&32&8&16&28&112\\ \end {array} \right)\;.
\esenn
From this the pagerank vector follows from Theorem \ref{thm:pagerank}:
\bsenn
\wp = \dfrac{1}{294}
\left( \begin {array}{ccccccc} 77, &21, &50, &44, &46, &28, &28\end {array}\right)\;.
\esenn
The same calculation using ${\cal L}_t=I-S_t$ gives:
\bsenn
\wp_t = \dfrac{1}{273}
\left( \begin {array}{ccccccc} 56, &21, &50, &44, &46, &28, &28\end {array}\right)\;.
\esenn
Notice that the teleporting decreases the rank of vertex 1
and re-distributes the remaining measure over the other vertices.
It is easy to check that $L=\{1\}$ and $\wp_t(1)=\dfrac{(1-\beta)\wp(1)}{1-\beta \wp(1)}$
(Corollary \ref{cor:pagerank2}).

\begin{centering}
\section{Appendix: Discrete versus Continuous Consensus}
\label{chap:appendix}
\end{centering}
\setcounter{figure}{0} \setcounter{equation}{0}

\noindent We briefly discuss the way discrete and continuous consensus relate to one another.
The case of diffusion is exactly the same.

If we start with the flow defined by $\dot x=-{\cal L}x$ on a digraph $G$ then, we can easily
determine its time 1 map, namely $x^{(n+1)}=e^{-{\cal L}}x^{(n)}$, as well as its properties.
A graph $\tilde G$ is a \emph{transitive closure} of the graph $\tilde G$ if for every possible
(directed) path in $i\rightsquigarrow j$ in $G$ there is a (directed) edge $i \rightarrow j$ in
${\tilde G}$.

\begin{proposition} Consider a non-negative row-stochastic adjacency matrix $S$ associated
with the graph $G$ and a (rw) Laplacian
${\cal L}=I-S$. Then \\
(i) $e^{-{\cal L}}$ is row-stochastic,\\
(ii) $e^{-{\cal L}}$ is non-negative, and\\
(iii) $e^{-{\cal L}}$ is the adjacency matrix of a transitive closure of $G$.
\label{prop:app1}
\end{proposition}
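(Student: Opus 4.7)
The plan is to reduce all three statements to a single observation: since $I$ and $S$ commute, one may factor
\[
e^{-\mathcal{L}}=e^{-(I-S)}=e^{-1}\,e^{S}=e^{-1}\sum_{k=0}^{\infty}\frac{S^{k}}{k!},
\]
after which everything follows from properties of the individual terms $S^{k}$. I would establish this factorization at the start and then treat (i), (ii), (iii) in turn.

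For (ii), I would observe that $S\geq 0$ entrywise implies $S^{k}\geq 0$ entrywise for every $k\geq 0$, so each summand in $\sum_{k}S^{k}/k!$ is non-negative; multiplication by the positive scalar $e^{-1}$ preserves this. For (i), the key is that $\mathcal{L}\mathbf{1}=(I-S)\mathbf{1}=0$ because $S$ is row stochastic, hence $\mathcal{L}^{k}\mathbf{1}=0$ for all $k\geq 1$ and the series for $e^{-\mathcal{L}}\mathbf{1}$ telescopes to $\mathbf{1}$. Combined with (ii), this shows $e^{-\mathcal{L}}$ is non-negative with row sums equal to one, i.e.\ row stochastic.

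For (iii), I would track the support of the sum. Using the paper's convention that $S_{ij}>0$ encodes an edge $j\to i$, a straightforward induction gives that $(S^{k})_{ij}>0$ if and only if there is a directed walk of length $k$ from vertex $j$ to vertex $i$ in $G$ (the case $k=0$ being the diagonal identity term, and the positive edge weights ensuring no cancellations when one takes products and sums). Consequently,
\[
(e^{-\mathcal{L}})_{ij}=e^{-1}\sum_{k=0}^{\infty}\frac{(S^{k})_{ij}}{k!}>0
\quad\Longleftrightarrow\quad i=j\ \text{or}\ j\rightsquigarrow i\ \text{in }G.
\]
Interpreting $e^{-\mathcal{L}}$ as a weighted adjacency matrix, every directed path in $G$ is therefore realized as a nonzero entry, which is exactly the defining property of a transitive closure of $G$ (with self-loops included at every vertex).

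The calculation is essentially routine; the only mild subtlety — and the step where I would take care — is the non-cancellation argument in (iii), ensuring that positivity of $(S^{k})_{ij}$ really is equivalent to the existence of a $j\rightsquigarrow i$ walk of length $k$. This requires only that $S$ has no negative entries, so no positive term in the expansion of $(S^{k})_{ij}$ can be cancelled, and then the argument closes cleanly.
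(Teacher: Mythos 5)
Your proof is correct and follows essentially the same route as the paper: factor $e^{-\mathcal{L}}=e^{-1}e^{S}$, read off non-negativity and row sums from the series, and identify positivity of $(e^{-\mathcal{L}})_{ij}$ with the existence of a directed walk via the no-cancellation argument for powers of a non-negative matrix. Your treatment of (i) via $\mathcal{L}\mathbf{1}=0$ and your explicit care with the edge-orientation convention are slightly more detailed than the paper's, but the substance is identical.
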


\begin{proof} (i) From the expansion $e^{-{\cal L}}$ in powers of ${\cal L}$, we see that
$e^{-{\cal L}}$ must have row sum 1. (ii) If we expand $e^{-{\cal L}}=e^{S-I}=e^{-1}e^S$ in powers of
$S$, we see that $e^{-{\cal L}}$ is non-negative. (iii) By (i) and (ii), we may consider
$e^{-{\cal L}}$ as a (weighted) adjacency matrix ${\tilde S}$ of a graph ${\tilde G}$.
The $ji$ entry of $S^k$ positive if and only if there is a path $i \rightsquigarrow j$ of length $k$
in $G$. The expansion in $e^S$ shows that every power of S occurs with a positive coefficient.
Thus the $ji$ entry of ${\tilde S}$ is positive if and only if there is a path
$i \rightsquigarrow j$.
\end{proof}

Now let us start with a time 1 map $x^{(n+1)}=Sx^{(n)}$ and consider the much more challenging
problem of constructing (if possible) a flow $\dot x=-{\cal L}x$ that generates it. The matrix
${\cal L}$ is sometimes referred as the \emph{logarithm} of $S$ \cite{Hig}.
A stochastic matrix (or its corresponding Markov chain)
for which such a flow exists is called \emph{embeddable}. The problem of characterizing the
embeddable stochastic matrices is known as the \emph{embedding problem for finite Markov chains}
and is an area of current research interest.  The general problem of determining if a given stochastic
matrix has such a flow has been recently shown to be NP-complete \cite{cubitt}.  Recent characterizations for special classes of stochastic matrices can be found in \cite{jia} and \cite{vanbrunt}.

There is an obvious obstruction to the construction of a continuous flow $\dot x= -{\cal L}x$
that generates a given discrete system $x^{(n+1)}=Sx^{(n)}$ as its time 1 map
$x^{(n+1)}=e^{-{\cal L}}x^{(n)}$. Namely, if $S$ has an eigenvalue 0, then there is there is no
flow whose time 1 map generates it. Intuitively, this is because $e^{-{\cal L}}v=0$ would imply
that ${\cal L}v$ diverges, which, as we have seen, is impossible for a laplacian ${\cal L}$.
For more details we refer the reader to \cite{Hig} (under \emph{logarithms of matrices}).

A more interesting obstruction follows from Proposition \ref{prop:app1}. If the matrix $S$
does not correspond to a transitively closed graph, then the logarithm of $S$ (if it exists) cannot
be rw Laplacian. Here is an example.
\bsenn
S=\begin{pmatrix} 1 & 0 & 0 \\ 1/2 & 1/2 & 0\\ 0 & 3/5 & 2/5 \end{pmatrix}
\quad \logand \quad
\ln (S)=\begin{pmatrix} 0& 0 & 0 \\ \ln(2) & -\ln(2) & 0 \\
\ln(2^{11}/5^5) & \ln(5^6/2^{12}) & \ln(2/5) \end{pmatrix}\;.
\esenn

Perhaps surprisingly, transitive closure of the graph associated with a stochastic matrix $S$ is not sufficient to ensure that $S$ is embeddable.  Examples can be found in \cite{jia}. 
These examples give real logarithms of $S$ which are not Laplacians. One can show that there
are no other real logarithms (c.f. Theorem 2 in \cite{culver}). 

\begin{corollary}
The right kernels of ${\cal L}$ an ${\tilde {\cal L}}$ are equal. Similar for the left kernels.
\label{cory:app3}
\end{corollary}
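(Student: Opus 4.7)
The plan is to compare $\cal L$ with $\tilde{\cal L}=I-e^{-\cal L}$ through the spectral mapping $\lambda\mapsto 1-e^{-\lambda}$, leveraging two facts already established: Theorem \ref{thm:right-null} (the zero eigenvalue of $\cal L$ has equal algebraic and geometric multiplicity $k$) and Theorem \ref{thm:positive} (every nonzero eigenvalue of $\cal L$ has strictly positive real part).

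First I would dispatch the easy inclusion $\ker{\cal L}\subseteq\ker\tilde{\cal L}$: if ${\cal L}v=0$, the power series for $e^{-\cal L}$ immediately yields $e^{-\cal L}v=v$, hence $\tilde{\cal L}v=0$. The identical calculation with $v$ a row vector handles the left-kernel inclusion.

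The substantive direction requires a Jordan-theoretic argument. I would decompose the ambient space into the generalized eigenspaces $V_\lambda$ of $\cal L$ and write ${\cal L}|_{V_\lambda}=\lambda I+N_\lambda$ with $N_\lambda$ nilpotent, so that $e^{-\cal L}|_{V_\lambda}=e^{-\lambda}e^{-N_\lambda}$ has $e^{-\lambda}$ as its unique eigenvalue and $\tilde{\cal L}|_{V_\lambda}$ has $1-e^{-\lambda}$ as its unique eigenvalue. For $\lambda\neq 0$, Theorem \ref{thm:positive} gives $\mathrm{Re}(\lambda)>0$, whence $|e^{-\lambda}|=e^{-\mathrm{Re}(\lambda)}<1$ and in particular $e^{-\lambda}\neq 1$; thus $\tilde{\cal L}|_{V_\lambda}$ is invertible and contributes nothing to $\ker\tilde{\cal L}$. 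For $\lambda=0$, Theorem \ref{thm:right-null} tells us $N_0=0$ (trivial Jordan structure at $0$), so $\tilde{\cal L}|_{V_0}=I-I=0$ and $V_0=\ker{\cal L}$. Putting these together gives $\ker\tilde{\cal L}=V_0=\ker{\cal L}$. For left kernels I would run the same argument with ${\cal L}^T$, whose spectrum coincides with that of $\cal L$ and whose zero eigenvalue has the same algebraic and geometric multiplicities.

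The step I expect to be the main obstacle is verifying non-contribution of the nonzero-$\lambda$ generalized eigenspaces: one must know that \emph{every} nonzero eigenvalue of $\cal L$ avoids the resonance set $2\pi i\mathbb{Z}$, and it is only Theorem \ref{thm:positive} --- that nonzero eigenvalues have strictly positive real part --- that rules out this resonance for free. Absent that theorem, purely imaginary eigenvalues of a general matrix could map to the eigenvalue $1$ of $e^{-\cal L}$ and spuriously enlarge $\ker\tilde{\cal L}$.
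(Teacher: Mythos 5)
Your argument is correct, but it takes a genuinely different route from the paper. The paper's proof is combinatorial: since $e^{-{\cal L}}$ is (by Proposition \ref{prop:app1}) a row-stochastic, non-negative matrix whose associated graph $\tilde G$ is a transitive closure of $G$, the two graphs have identical reaches, so Theorem \ref{thm:right-null} applied to $\tilde{\cal L}=I-e^{-{\cal L}}$ gives $\dim\ker\tilde{\cal L}=k=\dim\ker{\cal L}$; combined with the easy inclusion $\ker{\cal L}\subseteq\ker\tilde{\cal L}$ (which you also prove), equality follows from a dimension count. Your proof instead works entirely at the level of linear algebra: you split the space into generalized eigenspaces, use Theorem \ref{thm:positive} to rule out the resonance $e^{-\lambda}=1$ for $\lambda\neq 0$, and use the semisimplicity of the zero eigenvalue from Theorem \ref{thm:right-null} to identify $V_0$ with $\ker{\cal L}$. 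Note that your spectral argument yields both inclusions at once, so your opening paragraph on the easy inclusion is subsumed by it; and it generalizes beyond Laplacians to any matrix whose zero eigenvalue is semisimple and whose nonzero spectrum avoids $2\pi i\mathbb{Z}$. What the paper's route buys is brevity given the machinery already in place --- it needs no Jordan theory and does not invoke Theorem \ref{thm:positive} --- and it makes transparent the graph-theoretic reason the kernels agree, namely that exponentiation preserves reachability. Your treatment of the left kernel via ${\cal L}^T$ is also sound, since a matrix and its transpose share the same Jordan structure.
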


\begin{proof} The structure of the reaches in a graph is entirely determined by the paths in that graph.  Thus, the reaches and in particular their subdivision in cabals, exclusive parts, and common parts are the same for $G$ and ${\tilde G}$. Thus dimension of the kernels is the same.
Since kernels are linear spaces, it is sufficient to show that
$\ker {\cal L}\subseteq \ker {\tilde {\cal L}}$. So, let $v\in {\cal L}$, then
\bsenn
{\tilde {\cal L}}v=\left(I-e^{-{\cal L}}\right)v=0\;,
\esenn
where the last equality follows upon expanding the exponential.  \end{proof}

{\bf Remark:} As we observed above, there is one substantial difference between $S$ an $e^{-\cal L}$.
While the strongly connected components of the two are equal, the restriction of ${\tilde S}$
to such a component is strictly positive. That means that the matrix restricted to such a component
is \emph{primitive}, and no periodic behavior occurs. Hence the averaging we see in Theorem
\ref{thm:equil-random-walker} for a discrete system is replaced by a straightforward limit
for continuous systems (Theorem \ref{thm:consensus}).

\begin{acknowledgements}\label{ackref}
The authors gratefully acknowledge a useful conversation
with Jeff Ovall.
\end{acknowledgements}

\affiliationone{
   J. J. P. Veerman and E. Kummel\\
   Portland State University\\
      PO Box 751\\
      Portland, OR 97207-0751\\
      United States of America
   \email{veerman@pdx.edu\\
   ewan@pdx.edu}}
%
\end{document}